\colorlet{darkgreen}{green!50!black} 
\colorlet{violet}{blue!50!red}
\colorlet{orange}{yellow!50!red!80!black}
\tikzstyle{root}=[diamond, draw,  minimum size=4mm, inner sep=0]
\tikzstyle{part2}=[regular polygon, regular polygon sides=4, minimum size=6mm,draw=black,text=black, inner sep=0]
\tikzstyle{part2tiny}=[regular polygon, regular polygon sides=4, minimum size=1.5mm,draw=black,text=black, inner sep=0]
\tikzstyle{part1}=[circle, draw=black, text=black, minimum size=4mm, inner sep=.2mm]
\tikzstyle{part3}=[regular polygon, regular polygon sides=3, minimum size=4mm, draw, inner sep=0]
\tikzstyle{part4}=[regular polygon, regular polygon sides=5, draw, inner sep=0]
\tikzstyle{every child}=[level distance=8mm]
\theoremstyle{plain}
\newtheorem{lemma}{Lemma}[section]
\newtheorem{theorem}[lemma]{Theorem}
\newtheorem{proposition}[lemma]{Proposition}
\newtheorem{definition}[lemma]{Definition}
\newtheorem{construction}[lemma]{Construction}
\theoremstyle{remark}
\newtheorem*{remark}{Remark}
\newtheorem*{example}{Example}
\newcommand{\G}[2]{G_#1(#2)} 
\newcommand{\N}{\mathbb{N}}
\newcommand{\mcE}{\mathcal{E}}
\newcommand{\mcU}{\mathcal{U}}
\newcommand{\mcB}{\mathcal{B}}
\newcommand{\mcS}{\mathcal{S}}
\newcommand{\mcC}{\mathcal{C}}
\newcommand{\mcD}{\mathcal{D}}
\newcommand{\Id}{\mathrm{Id}}
\newcommand{\h}{\mathfrak{h}}
\newcommand{\al}{\alpha}
\newcommand{\om}{\omega}
\newcommand{\si}{\sigma}
\newcommand{\be}{\beta}
\newcommand{\ka}{\kappa}
\newcommand{\ld}{\ldots}
\newcommand{\cd}{\cdots}
\newcommand{\spl}{\mathrm{splice}}
\def\gb{\overline{g}}
\def\psipi{\psi_\pi}
\DeclareMathOperator{\wt}{wt}
\begin{document}
\begin{frontmatter}

\title{An edge-weighted hook formula for labelled trees}
\runtitle{An edge-weighted hook formula for labelled trees}

\author{\fnms{Valentin} \snm{Féray}\thanksref{t1}\corref{}\ead[label=e1]{valentin.feray@math.uzh.ch}}
\thankstext{t1}{VF is partially supported by ANR projet PSYCO and SNF grant "Dual combinatorics of Jack polynomials".}
\address{Institüt für Mathematik, Universität Zürich, Wintherturerstrasse 190,\\
 8057 Zürich, Switzerland\\ \printead{e1}}

\author{\fnms{I.P.} \snm{Goulden}\thanksref{t2}\ead[label=e2]{ipgoulden@uwaterloo.ca}}
\thankstext{t2}{IPG is supported by a Discovery Grant from NSERC.}
\address{Dept. Combinatorics \& Optimization, University of Waterloo, Waterloo,\\ Ontario, Canada N2L 3G1 \\ \printead{e2}}

\and
\author{\fnms{Alain} \snm{Lascoux}}
\address{Regretfully deceased during the preparation of this article.}

\runauthor{V.~Féray, I.P. Goulden and A. Lascoux}

\begin{abstract}
A number of hook formulas and hook summation formulas have previously appeared, involving various classes of trees. One of these classes of trees is rooted trees with labelled vertices, in which the labels increase along every chain from the root vertex to a leaf. In this paper we give a new hook summation formula for these (\emph{unordered increasing}) trees, by introducing a new set of indeterminates indexed by pairs of vertices, that we call \emph{edge weights}. This new result generalizes a previous result by F\'eray and Goulden, that arose in the context of representations of the symmetric group via the study of Kerov's character polynomials. Our proof is by means of a combinatorial bijection that is a generalization of the Pr\"ufer code for labelled trees.
\end{abstract}

\begin{keyword}
hook formula, tree enumeration, combinatorial bijection, generating function
\end{keyword}


\received{\smonth{11} \sday{14}, \syear{2013}}


\end{frontmatter}

 \section{Introduction}
 
 \subsection{Background}
 
The classical hook formula of Frame, Robinson and Thrall \cite[Theorem 1]{HookLengthFormula1954} gives the simple ratio 
\[ \chi^{\lambda}(\Id_{|\lambda |})  = \frac{|\lambda|!}{\prod_{\Box \in \lambda} h(\Box)} \]
for the dimension $\chi^{\lambda}(\Id_{|\lambda |})$ of the irreducible representation of the symmetric group associated with the Young diagram $\lambda$. Here $|\lambda|$ is the number of boxes in the diagram and $h(\Box)$ is the size of the hook attached to the box $\Box$. This result is equivalent to an enumerative result, since it is also the number of labellings of the boxes of $\lambda$ with the elements of $\N_{|\lambda|}=\{ 1,\ldots ,|\lambda|\}$ (once each) so that the labels increase along each row, and down each column.

Many results that look similar have appeared since, and are commonly referred to as \emph{hook formulas}.
A number of these involve various classes of trees.
Let us fix some terminology.
A (unordered) {\em tree} is an acyclic connected graph. The vertex-set (or label-set) of a tree $T$ is denoted by $V(T)$. {\em Rooted} means that we distinguish a vertex; then each edge
can be oriented towards the root and we call the head and tail of the edge {\em father} and {\em son}, respectively. We denote the father of a vertex $v$ in a rooted tree $T$ by $f_T(v)$, and set $f_T(v)= 0$ when $v$ is the root vertex. Then the rooted tree $T$ is completely defined by giving $f_T(v)$ for all vertices $v$ (in particular,
unless specified differently, sons of a given vertex are not ordered).
The {\em descendants} of a vertex are defined recursively as the sons and the descendants of the sons. If $u$ is a descendant of $v$, then we say that $v$ is an \emph{ancestor} of $u$.
The \emph{hook} attached to the vertex $v$ in the tree $T$, denoted by $\h_T(v)$, is the set consisting of $v$ and its descendants; the size of the hook $\h_T(v)$ is denoted by $h_T(v)$. An \emph{increasing} labelling of a rooted tree is a labelling of the vertices with distinct integers, so that the label of a son is always bigger than the label of its father; thus the root always gets the minimum label, and the labels increase along each branch from the root. An \emph{increasing} tree is an increasing labelling of the rooted tree.

It is well-known that
the number of ordered\footnote{Here, {\em ordered} means that labellings 
  $\begin{array}{c}
        \begin{tikzpicture}
            [font=\tiny,scale=.4]
            \node [part2tiny] {$1$}
                child{node [part2tiny] {$2$}
		}
                child{node [part2tiny] {$3$}
                }
            ;
        \end{tikzpicture}
\end{array}$ and
 $\begin{array}{c}
        \begin{tikzpicture}
            [font=\tiny,scale=.4]
            \node [part2tiny] {$1$}
                child{node [part2tiny] {$3$}
		}
                child{node [part2tiny] {$2$}
                }
            ;
        \end{tikzpicture}
\end{array}$ 
must be counted as different labellings.} increasing labellings of a given rooted tree is given by a formula that
looks like Frame-Robinson-Thrall formula.
Namely,
D. Knuth \cite[§5.1.4 Exer. 20]{KnuthArtProg3} proved that the number $L(T)$ of ordered increasing labellings of a rooted tree $T$ with vertex-set $\N_{|T|}$  is given by
\begin{equation}\label{EqHookOneTree}
    L(T) = \frac{|T|!}{\prod_{v \in T} h_T(v)},
\end{equation}
where $|T|$ is the number of vertices of $T$.

Another type of hook formula is a \emph{hook summation formula}. For example, let $\mcB_r$ denote the set of rooted binary trees with $r$ vertices (as usual for binary trees, sons of a given vertex are ordered). There is a well-known one-to-one correspondence between increasing binary trees with vertex-set $\N_r$, and permutations of size $r$ (see {\em e.g.} \cite[p. 23-25]{StanleyEC1}). Combining this with the rooted tree hook result~\eqref{EqHookOneTree}, and dividing by $r!$, yields the summation formula
\begin{equation}
    \sum_{ T\in\mcB_r}
    \;\prod_{v \in T} \frac{1}{h_T(v)}=1.
    \label{EqHookSum}
\end{equation}

More details on these hook formulas and some related works 
can be found in~\cite{FerayGoulden}.
In this article, two of us gave a hook summation formula that involved unordered increasing trees, which means that the sons of a vertex are not ordered. For our summation formula, we use the following notation for falling factorials: $(a)_m =a(a-1)\cdots (a-m+1)$ for positive integers $m$, with $(a)_0=1$, and $(a)_m=1/(a-m)_{-m}$ for negative integers $m$. Let $r\geq 1$ be an integer and $x_1, \cdots,x_r$ be formal variables. Let $\mcU_r$ denote the set of unordered increasing trees with vertex-set $\N_r$, and for $T\in\mcU_r$, define a weight $\wt(T)$ by
    \[ \wt(T) = \prod_{v=2}^r x_{f_T(v)}
            \bigg( \Big(\sum_{u \in \h_T(v)} x_u \Big) - h_T(v) + 1 \bigg).\]
 Then our hook summation formula~\cite{FerayGoulden} was given by
    \begin{equation}\label{hookform}
    \sum_{T\in\mcU_r} \wt(T)
    = x_1 \cdots x_r
    (x_1+\cdots +x_r-1)_{r-2}.
    \end{equation}
    \label{ThmHookFormula}
Three proofs of this result were presented in~\cite{FerayGoulden}. One of these involved Kerov's character polynomials (see, \textit{e.g.},\cite{Biane2003}), and thus gives a connection to the representation theory of the symmetric group, that does not seem related to the Frame-Robinson-Thrall formula.  

We also proved that~\eqref{hookform} specializes to a classical enumerative formula for Cayley trees. A \emph{Cayley tree} is a tree with labelled vertices (so they are distinguishable) -- these are not embedded in the plane, and there is no root vertex. Let $\mcC_r$ denote the set of Cayley trees with vertex-set $\N_r$. Borchardt~\cite{BorchardtCayley} and Cayley \cite{CayleyTrees}
proved that, for $r\geq 1$,
\begin{equation}
    \label{EqCayleyRefined}
    \sum_{T\in\mcC_r}
 x_1^{d_T(1)} \cdots x_r^{d_T(r)} =
x_1 \cdots x_r (x_1+\cdots +x_r)^{r-2},
\end{equation}
where $d_T(i)$ denotes the degree of the vertex $i$ in the tree $T$. We proved in~\cite{FerayGoulden} that~\eqref{hookform} specializes to~\eqref{EqCayleyRefined} in the case $x_1,\cdots,x_r \to \infty$, that is
for the highest degree terms in the $x_i$. On the right-hand sides, this is straightforward, so the work here is on the left-hand sides, for which we constructed a combinatorial mapping between the sets $\mcU_r$ and $\mcC_r$.

\subsection{The main result}

In this paper, we prove a new hook summation formula for unordered increasing trees. This formula is given in the following Theorem, which is our main result. This generalizes~\eqref{hookform} by introducing a set of doubly indexed indeterminates that we will refer to as \emph{edge weights}.
\begin{theorem}\label{NewHook}
Let $r\geq 2$ be an integer and $x_i$, $i=1,\ldots ,r$, $y_{i,j}$, $2\le i \le  j\le r$ be formal variables.
    For an unordered increasing tree $T$ with vertex-set $\N_r$,
    define the weight to be
    \[ \wt_y(T) = \prod_{v=2}^r \Big(  x_{f_T(v)} \sum_{u \in \h_T(v)} y_{v,u} \Big) .\]
    Then
    \begin{equation}\label{yhookform}
    \sum_{T\in\mcU_r} \wt_y(T)
    = x_1 y_{r,r} \prod_{i=2}^{r-1} \Bigg(  \sum_{j=1}^i x_j y_{i,i} + \sum_{j=i+1}^{r} x_i y_{i,j} \Bigg) .
    \end{equation}
\end{theorem}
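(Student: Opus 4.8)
The plan is to prove \eqref{yhookform} by a generalized Prüfer-code bijection, following the strategy used in \cite{FerayGoulden} for the specialization \eqref{hookform}, but now keeping track of the edge-weight indeterminates $y_{i,j}$. First I would expand the left-hand side: each tree $T \in \mcU_r$ contributes a sum of monomials, one for each way of choosing, for every non-root vertex $v$, a vertex $u(v) \in \h_T(v)$; such a choice picks out the term $x_{f_T(v)} y_{v,u(v)}$ from the factor indexed by $v$. So the left-hand side is a sum over pairs $(T, \phi)$ where $\phi$ is a function assigning to each $v \in \{2,\dots,r\}$ a descendant-or-self $u(v) = \phi(v) \in \h_T(v)$, with monomial weight $\prod_{v=2}^r x_{f_T(v)} y_{v,\phi(v)}$. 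On the right-hand side, expanding the product over $i = 2,\dots,r-1$ gives a sum over functions $\psi$ that choose, for each $i$, one of the $r$ summands: either $x_j y_{i,i}$ for some $j \le i$, or $x_i y_{i,j}$ for some $j > i$. The goal is to construct a weight-preserving bijection between the pairs $(T,\phi)$ and these choices $\psi$ (augmented by the fixed factor $x_1 y_{r,r}$ corresponding to a virtual choice at $i = r$).

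The key step is to set up the bijection vertex by vertex, processing $i = r, r-1, \dots, 2$ in decreasing order (this is the natural order for building a Prüfer-type code on an increasing tree, since the largest label is always a leaf). Concretely, I would describe an algorithm that, given the data $\psi$, reconstructs $T$ and $\phi$ incrementally: at stage $i$ one has already determined the structure of $T$ restricted to labels $\ge i+1$ together with their eventual attachment points, and the $i$-th coordinate of $\psi$ tells us either (a) that vertex $i$ will be a leaf attached to father $j \le i$, in which case $\phi(i) = i$ and the term is $x_j y_{i,i}$, matching the diagonal summand $x_j y_{i,i}$; or (b) that vertex $i$ has a subtree already assembled among the larger labels and $\phi(i) = j > i$ lies in that subtree, giving the term $x_i y_{i,j}$. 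One must carefully say how the "already assembled" pieces are stored (a forest of rooted subtrees waiting to be attached, each tagged with a distinguished marked descendant) and how step $i$ either creates a new singleton piece or merges/relabels existing ones, so that at the end a single tree rooted at $1$ emerges with all $\phi$-values consistent. The factor $x_1 y_{r,r}$ on the right accounts for the root contributing an $x_1$ for its unique "new" child plus the forced choice $\phi$-value $r$ at the top; I would make this bookkeeping precise.

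The main obstacle — and where the bulk of the work lies — is verifying that this procedure is well-defined and invertible: namely that at each stage the constraints $\phi(i) \in \h_T(i)$ and "labels increase from root to leaves" are exactly encoded by the range of choices available to $\psi_i$ (the $i$ choices $j \le i$ for the father, the $r-i$ choices $j > i$ for a descendant marker), and that no choice is ever blocked or double-counted. This amounts to checking that the "pieces of tree plus markers" carried by the algorithm at stage $i$ always have a total of exactly $i$ attachment slots on labels $\le i$ and $r - i$ available descendant-marker slots, which is where an inductive invariant on the partial forest must be formulated and maintained. Weight-preservation is then immediate from the construction, since every step was designed to match one factor on each side; the genuine content is the combinatorial invariant that makes the map a bijection.

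I would also record, as a consistency check, that setting $y_{i,j} = x_j$ for $j > i$ and $y_{i,i} = \big(\sum_{u} x_u - h_T(i) + 1\big)/\text{(appropriate normalization)}$ — more precisely, the substitution that collapses $\sum_{u \in \h_T(v)} y_{v,u}$ to $\big(\sum_{u \in \h_T(v)} x_u\big) - h_T(v) + 1$, achieved by $y_{v,u} = x_u$ for $u \ne v$ and $y_{v,v} = x_v - h_T(v) + 1$ — recovers $\wt_y(T) = \wt(T)$ and transforms the right-hand side of \eqref{yhookform} into that of \eqref{hookform}, so Theorem \ref{NewHook} indeed generalizes Theorem \ref{ThmHookFormula}.
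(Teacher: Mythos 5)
Your opening reduction is sound and in fact parallels the paper's first step (Proposition~\ref{LRDconnection}): since each label $i\in\{2,\ldots,r\}$ occurs exactly once as a first index of a $y$-variable on each side of \eqref{yhookform}, matching $y$-monomials forces your descendant-choice function $\phi$ on the left to coincide with the second-index data of $\psi$ on the right, and the theorem reduces to a purely $x$-weighted identity for each fixed dominating function $\phi$ --- in the paper's language, a weight-preserving bijection $\mcC(\pi)\rightarrow\mcE(\pi)$, where $\pi$ is the partition induced by the functional digraph of $\phi$. The gap is that this bijection, which carries the entire content of the theorem, is only promised, never constructed. Everything from ``I would describe an algorithm'' onward is a statement of intent: you do not define the merging operation on the partial forest, you do not formulate the invariant you say must be maintained, and --- most importantly --- you give no mechanism for inverting the map. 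The paper devotes all of Section~3 to precisely this point: it introduces $v$-dependence graphs and a notion of irreducibility, shows that a tree splits as a splice exactly along unions of connected components of its dependence graph (Lemma~\ref{Lem_splcondvtx}), and proves that each splice in Construction~\ref{mapping} is uniquely reversible by locating the component containing the maximum or second-maximum block (Lemma~\ref{lemsplirred}, Propositions~\ref{proposition_psiandred} and~\ref{prop_Treducible}). Without an analogue of this irreducibility criterion your claim that ``no choice is ever blocked or double-counted'' is unsupported, and the counting heuristic you offer ($i$ father-slots and $r-i$ marker-slots at stage $i$) establishes at best equal cardinalities per stage, not a well-defined inverse.

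A secondary but genuine error: your consistency check is not a legitimate specialization, because $y_{v,v}=x_v-h_T(v)+1$ depends on the tree $T$ and hence is not a substitution of formal variables (it would also have to be applied simultaneously to the right-hand side of \eqref{yhookform}, where no tree is present). The correct substitution, stated in the paper, is $y_{v,u}=x_u-1$ for $v<u$ and $y_{u,u}=x_u$, which yields $\sum_{u\in\h_T(v)}y_{v,u}=\big(\sum_{u\in\h_T(v)}x_u\big)-h_T(v)+1$ uniformly in $T$ and recovers \eqref{hookform}.
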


Note that Theorem~\ref{NewHook} specializes to~\eqref{hookform} immediately, by the following substitution: $y_{v,u}=x_u-1$ for $v<u$, and $y_{u,u}=x_u$.

Our proof of Theorem~\ref{NewHook} is by a combinatorial bijection. This involves a number of stages, and in our description, it will be convenient to identify the left-hand and right-hand sides of~\eqref{yhookform} separately, as
\begin{equation}\label{LRHSmain}
L(x,y)=  \sum_{T\in\mcU_r} \wt_y(T) ,\quad R(x,y)= x_1 y_{r,r} \prod_{i=2}^{r-1} \Bigg(  \sum_{j=1}^i x_j y_{i,i} + \sum_{j=i+1}^{r} x_i y_{i,j} \Bigg) .
\end{equation}
Of course, in these terms, our main result is equivalent to 
\begin{equation}\label{mainequiv}
L(x,y)=R(x,y).
\end{equation}

\subsection{Outline of paper}
In the remainder of this paper we give a combinatorial proof of our main result. This is carried out by defining a combinatorial mapping in Section 2 that we describe in terms of an operation on unordered increasing trees called \emph{splice}. Then in Section 3 we prove a number of properties of our splice operation, enabling us to prove that the combinatorial mapping is a bijection. This directly proves~\eqref{mainequiv}, and hence Theorem~\ref{NewHook}.

There is one intriguing aspect of our main result that we have been unable to resolve. Note that our proof of the main result in this paper is based on a bijection for~$\mcU_r$, the set of \emph{unordered increasing trees}. However, if we evaluate the right-hand side of ~\eqref{yhookform} at $x_i=1$ for all $i$, and $y_{i,j}=1$ for all $i,j$, then we obtain $r^{r-2}$. But as we have noted above, $|\mcC_r|=r^{r-2}$~(\cite{BorchardtCayley},~\cite{CayleyTrees}), which suggests that there should be a combinatorial proof of the main result based on a bijection for~$\mcC_r$, the set of \emph{Cayley trees}. We have been unable to find such a proof, and suggest it as a problem for others to resolve.

\section{A combinatorial mapping}

\subsection{Dominating functions}
For a set $\mcS$ of positive integers, let $\Pi(\mcS)$ denote the set of partitions of $\mcS$ into an unordered set of nonempty subsets. The subsets are called the \emph{blocks} of the partition, and we denote the number of blocks of a partition $\pi$ by $|\pi|$. If $\pi$ has blocks $\pi_1,\ld ,\pi_k$, then we let $\mu_i= \max\,\pi_i$, for $i=1,\ld ,k$, and we index the blocks so that $\mu_1<\cd <\mu_k$.

For two sets $\mcS$ and $\mcS^{\prime}$ of positive integers,
 the function $g:\mcS\rightarrow\mcS'$ is called \emph{dominating} if $g(i)\geq i$ for all $i\in\mcS$. 
For such a function $g$, we denote
\[ \wt_g= \prod_{i \in S} y_{i,g(i)}.\]
We say that a dominating function $g: \mathcal{S} \rightarrow \mathcal{S}$ (\textit{i.e.}, with $\mathcal{S}^{\prime} =\mathcal{S}$) is "on $\mathcal{S}$". Consider the functional digraph of a dominating function $g$ on $\mcS$: the vertices are the elements of $\mcS$, and the directed edges are given by $(i,g(i))$, $i\in\mcS$. The vertex-sets of the connected components (ignoring the directions on edges) form a partition $\pi\in\Pi(\mcS)$, and we say that $g$ has \emph{induced} partition $\pi$. Let $\mcD(\pi)$ denote the set of all dominating functions on $\mcS$ with induced partition $\pi$, and let
\begin{equation*}
    D(\pi)=\sum_{g\in\mcD(\pi)} \wt_g
    =\sum_{g\in\mcD(\pi)}\;\;\prod_{i\in\mcS}y_{i,g(i)}.
\end{equation*}

For $\pi\in\Pi(\mcS)$, let $\mcE(\pi)$ denote the set of unordered increasing trees $T$ on vertex-set $\mcS$ such that every block of $\pi$ is a subchain of $T$.
In other words, for every pair of elements $i<j$ in the same block of $\pi$, $i$ is an ancestor of $j$ in $T$.
For any unordered increasing tree $T$, let 
\begin{equation}\label{kadef}
\kappa(T)=\prod_{i\in V(T)}\; x_i^{\si_i(T)},
\end{equation}
where $\si_i(T)$ denotes the number of sons of vertex $i$ in $T$.

Now we consider a restricted class of set partitions. If $\mcS$ is a set of positive integers containing $1$, then $\Pi_1(\mcS)$ is the set of partitions of $\mcS$ in which $\{ 1\}$ is a block. In this case, necessarily $\pi_1=\{ 1\}$, and $\mu_1=1$. For such a  partition $\pi\in\Pi_1(\mcS)$, let
\begin{equation*}
\mcC(\pi)=\N_{\mu_2}\times\cd\times \N_{\mu_{|\pi|-1}}=\{ (c_2,\ld,c_{|\pi|-1}):1\leq c_i\leq \mu_i,i=2,\ld ,|\pi |-1\},
\end{equation*}
(if $|\pi|= 2$, the set $\mcC(\pi)$ contains one element: the empty list). For $c\in\mcC(\pi)$, let
\begin{equation}\label{omdef}
\om(c,\pi)=\frac{x_{c_2}\cd x_{c_{|\pi |-1}}}{x_{\mu_2}\cd x_{\mu_{|\pi |}}} \prod_{i\in\mcS}\; x_i.
\end{equation}

There is a close connection between dominating functions and the expressions $L(x,y)$, $R(x,y)$ defined in~\eqref{LRHSmain}, given in the following result.

\begin{proposition}\label{LRDconnection}
For any integer $r \ge 2$, one has:
\begin{align*}
\!\!\!\!\!\!\!\! &\text{\emph{(a)}} \qquad\qquad\qquad\qquad  y_{1,1} \cdot L(x,y) &=&  \quad\sum_{\pi\in\Pi_1(\N_r)} D(\pi)\sum_{T\in\mcE(\pi)}\kappa(T), \\
\!\!\!\!\!\!\!\! &\text{\emph{(b)}} \qquad\qquad\qquad\qquad  y_{1,1} \cdot R(x,y) &=& \quad\sum_{\pi\in\Pi_1(\N_r)} D(\pi)\sum_{c\in\mcC(\pi)} \om(c,\pi).
\end{align*}
\end{proposition}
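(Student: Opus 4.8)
The plan is to prove (a) and (b) by the same mechanism: expand the defining products, recognize the resulting summation index as a dominating function on $\N_r$, and then sort the terms according to the induced partition $\pi$. For (a), I would start from $L(x,y)=\sum_{T\in\mcU_r}\prod_{v=2}^{r}\bigl(x_{f_T(v)}\sum_{u\in\h_T(v)}y_{v,u}\bigr)$ and expand the inner products: choosing, for each $v\in\{2,\dots,r\}$, a term $y_{v,u}$ with $u\in\h_T(v)$ is the same as choosing a function $v\mapsto g(v)$ with $g(v)\in\h_T(v)$. Since labels strictly increase down each branch of an increasing tree, $u\in\h_T(v)$ forces $u\ge v$; multiplying the identity by $y_{1,1}$ and setting $g(1)=1$ turns $y_{1,1}\prod_{v\ge2}y_{v,g(v)}$ into $\wt_g$ for a dominating function $g$ on $\N_r$ with $g(1)=1$. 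The accompanying $x$-monomial is $\prod_{v=2}^{r}x_{f_T(v)}=\prod_{w\in V(T)}x_w^{\si_w(T)}=\kappa(T)$. Interchanging the order of summation gives $y_{1,1}\,L(x,y)=\sum_{g}\wt_g\sum_{T}\kappa(T)$, where $g$ runs over dominating functions on $\N_r$ with $g(1)=1$ and the inner sum is over all $T\in\mcU_r$ with $g(v)\in\h_T(v)$ for every $v$.

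The one substantive point in (a) is the identification: if $g$ has induced partition $\pi$, then $\{T\in\mcU_r:\ g(v)\in\h_T(v)\ \forall v\}=\mcE(\pi)$. For the inclusion ``$\subseteq$'', fix a block $\pi_i$ with maximum $\mu_i$ and any $v\in\pi_i$; the sequence $v,g(v),g^2(v),\dots$ is nondecreasing, stays inside $\pi_i$, hence stabilizes at $\mu_i$, and since $g(w)\in\h_T(w)$ means ``$w$ is a weak ancestor of $g(w)$'', transitivity shows $v$ is a weak ancestor of $\mu_i$; the weak ancestors of a fixed vertex form a chain in $T$, so $\pi_i$ is a subchain and $T\in\mcE(\pi)$. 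For ``$\supseteq$'', if $T\in\mcE(\pi)$ then $v$ and $g(v)$ lie in a common block which is a subchain of $T$, and $g(v)\ge v$ forces $v$ to be a weak ancestor of $g(v)$, i.e. $g(v)\in\h_T(v)$. Grouping the functions $g$ by induced partition $\pi$ — noting that $g(1)=1$ holds exactly when $\{1\}$ is a block, i.e. $\pi\in\Pi_1(\N_r)$, in which case $\sum_g\wt_g=D(\pi)$ — yields (a).

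For (b), I would expand $R(x,y)$ by writing its $i$-th factor ($2\le i\le r-1$) as $y_{i,i}\sum_{j\le i}x_j+x_i\sum_{j>i}y_{i,j}$. Choosing, for each such $i$, either a term $y_{i,i}x_{c_i}$ from the first sum (with $c_i\in\N_i$ free) or a term $x_i y_{i,j}$ from the second sum (so $j\in\{i+1,\dots,r\}$), together with the outer factors $y_{1,1},x_1,y_{r,r}$, assembles the $y$-part into an arbitrary dominating function $g$ on $\N_r$ with $g(1)=1$ — the first choice at $i$ corresponding precisely to $g(i)=i$ — contributing $\wt_g$. Hence $y_{1,1}R(x,y)=\sum_{g}\wt_g\,X_g$ with $X_g=x_1\prod_{i\in A_g}\bigl(\sum_{c=1}^{i}x_c\bigr)\prod_{i\in B_g}x_i$, where $A_g=\{i\in\{2,\dots,r-1\}:g(i)=i\}$ and $B_g=\{2,\dots,r-1\}\setminus A_g$. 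It remains to check $X_g=\sum_{c\in\mcC(\pi_g)}\om(c,\pi_g)$. Here one uses that, restricted to any block $B$ of the induced partition, the functional digraph of a dominating function is connected with its unique cycle the loop at $\max B$, so the fixed points of $g$ are exactly the block-maxima; since $\mu_1=1$ and $\mu_{|\pi|}=r$, this gives $A_g=\{\mu_2,\dots,\mu_{|\pi|-1}\}$ and $B_g=\N_r\setminus\{\mu_1,\dots,\mu_{|\pi|}\}$. Plugging these into the definition of $\om$, and simplifying $\prod_{i\in\N_r}x_i/(x_{\mu_2}\cdots x_{\mu_{|\pi|}})=x_1\prod_{i\in B_g}x_i$, matches $X_g$ term by term; since $X_g$ then depends only on $\pi_g$, grouping by $\pi\in\Pi_1(\N_r)$ gives (b).

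I expect the main obstacle to be the structural identification underlying each part — the equality $\{T:g(v)\in\h_T(v)\ \forall v\}=\mcE(\pi_g)$ in (a), in particular the iteration-and-transitivity argument for ``$\subseteq$'', and the ``fixed points $=$ block-maxima'' observation in (b). Everything else is monomial bookkeeping, the only delicate part being to keep track of the index ranges (the roles of $\mu_1=1$ and $\mu_{|\pi|}=r$) so that the $i\in\{2,\dots,r-1\}$ products in $R$ and the $t\in\{2,\dots,|\pi|-1\}$ products in $\om$ line up exactly.
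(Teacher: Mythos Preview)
Your proposal is correct and follows essentially the same approach as the paper's proof: expand the products in $L$ and $R$ to index terms by dominating functions $g$ on $\N_r$ with $g(1)=1$, then group by the induced partition. The paper asserts without proof the two key structural facts you spell out---that $\{T:g(v)\in\h_T(v)\ \forall v\}=\mcE(\pi_g)$ and that the fixed points of a dominating function are exactly the block maxima---so your version is in fact slightly more complete than the paper's.
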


\begin{proof}
(a) 
For this equation, by definition,
\[L(x,y)=  \sum_{T\in\mcU_r} \wt_y(T).\]
But $\wt_y(T)=\kappa(T) \sum_g \wt_g$, where the sum runs over 
functions $g$ from $\{2,\ldots,r\}$ to $\{2,\ldots,r\}$ such that,
for each $u$, its image $g(u)$ lies in $\h_T(u)$
(as $T$ is an increasing tree, such functions are automatically dominating).
We can extend such functions $g$ to $\overline{g} : \N_r \to \N_r$
by setting $\overline{g}(1)=1$. Note that $\wt_{\overline{g}}=y_{1,1} \wt_g$.
The conditions $\overline{g}(1)=1$ and $g(u) \in \h_T(u)$ for $u \ge 2$
are equivalent to $\pi \in \Pi_1(\N_r)$ and $T \in \mcE(\pi)$,
where $\pi$ is the partition induced by $\overline{g}$.
Therefore
\begin{align*}
    y_{1,1} L(x,y) &=  \sum_{T\in\mcU_r} \kappa(T) 
    \left( \sum_{\pi \in \Pi_1(\mcS) \atop \text{s.t. }T \in \mcE(\pi)}
\sum_{\gb \in \mcD(\pi)} \wt_{\gb} \right) \\
&= \sum_{\pi \in \Pi_1(\mcS)} D(\pi) \sum_{T\in\mcE(\pi)} \kappa(T) ,
\end{align*}
giving part~(a) of the result.

(b) For this equation, note that $R(x,y)$ can be rewritten as
\[R(x,y)= x_1 x_2 \cdots x_{r-1} y_{r,r}
\prod_{i=2}^{r-1} \left( y_{i,i} \, \frac{x_1 + \cdots +x_i}{x_i} +\sum_{j=i+1}^r y_{i,j} \right).\]
Expanding the product in terms of dominating functions, we get
\[R(x,y) = x_1 x_2 \cdots x_{r-1} y_{r,r}
\sum_{g : \{2,\ldots,r-1\} \to \{2,\ldots,r\} \atop g \text{ dominating}}
\left( \wt_g \prod_{i\ :\ g(i)=i} \frac{x_1+\cdots+x_i}{x_i} \right).\]
As above, we can extend $g$ to $\gb : \N_r \to \N_r$
by setting $\gb(1)=1$ and $\gb(r)=r$.
Then $\wt_{\gb}=y_{1,1} y_{r,r} \wt_g$.
Now let $\pi$ denote the partition induced by $\gb$.
An important remark is that
the integers $i \neq 1,r$ such that $g(i)=\gb(i)=i$
are exactly the maxima of the blocks of $\pi$
except for $1$ and $r$, which are given by $\mu_2,\cdots,\mu_{|\pi|-1}$. Note also that $\mu_{|\pi|}=r$.
Hence
\[\prod_{i\ :\ g(i)=i} \frac{x_1+\cdots+x_i}{x_i}
= \frac{1}{x_{\mu_2}\cdots x_{\mu_{|\pi|-1}}} 
\sum_{c \in \mcC(\pi)} x_{c_2} \cdots x_{c_{|\pi|-1}},\]
and so we obtain 
\[x_1 x_2 \cdots x_{r-1} \prod_{i\ :\ g(i)=i} \frac{x_1+\cdots+x_i}{x_i}
= \sum_{c \in \mcC(\pi)} \om(c,\pi).\]
Thus $R(x,y)$ is given by
\[y_{1,1} R(x,y) = \sum_{ {\gb : \N_r \to \N_r \atop \gb(1)=1,\gb(r)=r}
\atop \gb \text{ dominating}}
\wt_{\gb} \sum_{c \in \mcC(\pi)} \om(c,\pi).\]
Note that $\gb$ dominating implies $\gb(r)=r$ while the condition $\gb(1)=1$
means that the induced partition $\pi$ is in $\Pi_1(\N_r)$.
Hence, splitting the sum depending on the induced partition of $\gb$,
we obtain part~(b) of the result.
\end{proof}

Comparing Proposition~\ref{LRDconnection} with~\eqref{mainequiv}, we see that Theorem~\ref{NewHook} is implied by a bijection

\begin{equation}\label{mainbijn}
\psipi:\mcC (\pi) \rightarrow \mcE (\pi): c \mapsto T,
\end{equation}
with the weight-preserving property that $\ka(T)=\om(c,\pi)$, for each $\pi\in\Pi_1(\N_r)$. We will find such a bijection $\psipi$.

\subsection{An operation on rooted trees}\label{subsec:splice}

A convenient construct for an unordered increasing tree $T$ with vertex $v$ is the $v$-decomposition of $T$, described as follows.

\begin{definition}
Let $T_1$ be an unordered increasing tree with root vertex $a_1$, and let $v$ be any vertex in $T$ ($v$ can be equal to $a_1$). Suppose that the unique maximal
chain from $a_1$ to $v$ is given by $a_1<a_2<\cd <a_k=v$, $k\geq 1$. Now remove the edges on the chain in $T$ from $a_1$ to $a_k$. There are $k$ components in the resulting graph, each of which is an unordered increasing tree, whose root vertex is on the chain from $a_1$ to $a_k$. Let $T^{(a_i)}$ be the component among these that is rooted at vertex $a_i$, $i=1,\ldots ,k$. Then the $v$-\emph{decomposition} of $T$ is the ordered list $T^{(a_1)},\ldots ,T^{(a_k)}$.
\end{definition}

An example of $v$-decomposition of a tree $T$ is given in Figure~\ref{fig:VDec}.

\begin{figure}
    \[\begin{array}{c|cccc}
        \begin{array}{c}
        \begin{tikzpicture}
            [font=\scriptsize,
            level 1/.style={sibling distance=1.5cm},
            level 2/.style={sibling distance=1cm},
            level 3/.style={sibling distance=1cm},
            level 4/.style={sibling distance=1cm},
            level 5/.style={sibling distance=1cm},
            level 6/.style={sibling distance=1cm}]
            \node [part2] {$1$}
                child{node [part2] {$2$}
                    child{node [part2] {$5$}
                    }
                    child{node [part2] {$8$}
                    }
                }
                child{node [part2] {$3$}
                    child{node [part2] {$9$}
                        child{node [part2] {$11$}
                            child{node [part2] {$12$}
                            }
                            child{node [part2] {$21$}
                            }
                        }
                        child{node [part2] {$15$}
                        }
                        child{node [part2] {\underline{$14$}}
                            child{node [part2] {$19$}
                                child{node [part2] {$20$}
                                }
                            }
                            child{node [part2] {$17$}
                            }
                        }
                    }
                }
            ;
            \node at (0,-4.8) {\normalsize $S$};
        \end{tikzpicture}
    \end{array}&
        \begin{array}{c}
        \begin{tikzpicture}
            [ font=\scriptsize,
            level 2/.style={sibling distance=1cm}]
            \node [part2] {$1$}
                child{node [part2] {$2$}
                    child{node [part2] {$5$}
                    }
                    child{node [part2] {$8$}
                    }
                };
                \node at (0,-2.4) {\normalsize $S^{(1)}$};
        \end{tikzpicture}
    \end{array}&
        \begin{array}{c}
        \begin{tikzpicture}
            [ font=\scriptsize]
            \node [part2] {$3$};
            \node at (0,-.8) {\normalsize $S^{(3)}$};
        \end{tikzpicture}
    \end{array}&
        \begin{array}{c}
        \begin{tikzpicture}
            [ font=\scriptsize,
            level 1/.style={sibling distance=1cm},
            level 2/.style={sibling distance=1cm}]
                \node [part2] {$9$}
                    child{node [part2] {$11$}
                        child{node [part2] {$12$}
                        }
                        child{node [part2] {$21$}
                        }
                    }
                    child{node [part2] {$15$}};
            \node at (0,-2.4) {\normalsize $S^{(9)}$};
        \end{tikzpicture}
    \end{array}&
        \begin{array}{c}
        \begin{tikzpicture}
           [ font=\scriptsize,
            level 1/.style={sibling distance=1cm}]
            \node [part2] {\underline{$14$}}
                child{node [part2] {$19$}
                    child{node [part2] {$20$}
                    }
                }
                child{node [part2] {$17$}
                }
            ;
            \node at (0,-2.4) {\normalsize $S^{(14)}$};
        \end{tikzpicture}
    \end{array}
    \end{array}\]
\caption{The $v$-decomposition of a tree $S$ (with $v=14$).}
\label{fig:VDec}
\end{figure}
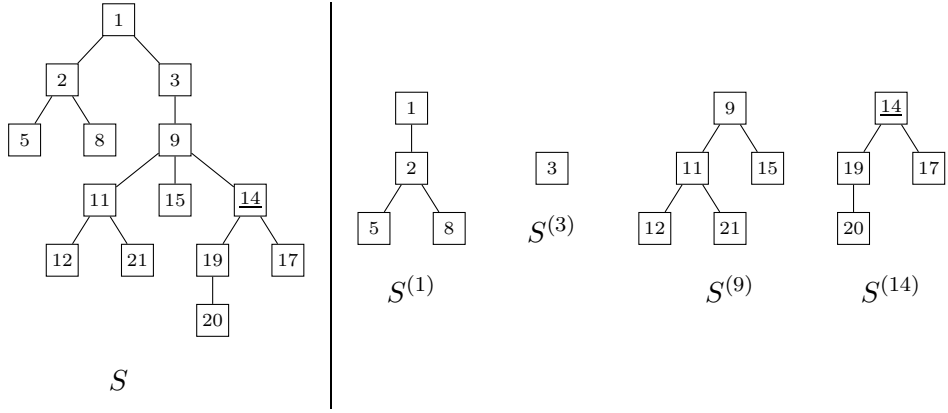
The bijection $\psipi$ will be constructed as the iteration of an elementary combinatorial operation on marked trees called splice,
that we define next, in terms of $v$-decompositions.

\begin{definition}\label{Def_Splice}
 Suppose that $T_1$ and $T_2$ are two unordered increasing trees with disjoint vertex-sets, and let $v_1,v_2$ be vertices in $T_1,T_2$ respectively, with $v_1>v_2$. Let the $v_1$-decomposition of $T_1$ be $T_1^{(a_1)},\ldots ,T_1^{(a_k)}$, and the $v_2$-decomposition of $T_2$ be $T_2^{(b_1)},\ldots ,T_2^{(b_m)}$. Then, since $a_k=v_1 >v_2=b_m$, we have 
$$ b_1<\cd <b_{\be_1}<a_1<\cd <a_{\al_1}<b_{\be_1+1}<\cd <b_{\be_2}$$
$$ <a_{\al_1+1}<\cd <a_{\al_2}< \cd <b_{\be_{\ell -1}+1}<\cd <b_{\be_{\ell}}<a_{\al_{\ell -1}+1}<\cd <a_{\al_\ell},  $$
for some unique $\ell \geq 1$ and $1\leq \al_1<\cd <\al_{\ell}=k$, $\; 0\leq \be_1<\cd <\be_{\ell}=m$. 

Then define the \emph{splice} of $T_1$ and $T_2$, with \emph{splicing} vertices $v_1$ and $v_2$, denoted by
\begin{equation}\label{splicedef}
R = \spl(T_1,v_1;T_2,v_2),
\end{equation}
to be the unordered increasing tree with $v_1$-decomposition given by
\begin{equation}\label{spldecomp}
T_2^{(b_1)}, \ld ,T_2^{(b_{\be_1})},T_1^{(a_1)},\ld ,T_1^{(a_{\al_1})},\ld,T_2^{(b_{\be_{\ell -1}+1})},\ld ,T_2^{(b_{\be_{\ell}})},
\end{equation}
$$T_1^{(a_{\al_{\ell -1}+1})},\ld ,T_1^{(a_{\al_\ell})}.$$
\end{definition}

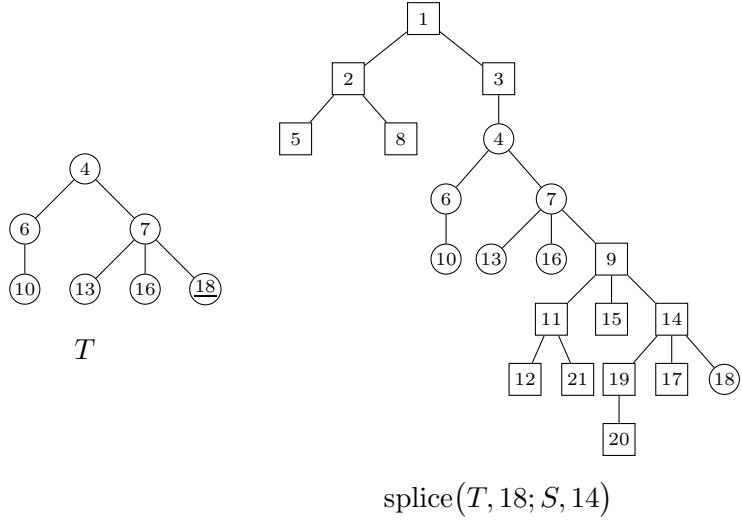
\begin{figure}
    \[\begin{tikzpicture}
        \begin{scope}[level 1/.style={sibling distance=1.6cm},
            level 2/.style={sibling distance=8mm}]
            \node [part1] {\scriptsize $4$}
                child{node [part1] {\scriptsize $6$}
                    child{node [part1] {\scriptsize $10$}
                    }
                }
                child{node [part1] {\scriptsize $7$}
                    child{node [part1] {\scriptsize $13$}
                    }
                    child{node [part1] {\scriptsize $16$}
                    }
                    child{node [part1] {\underline{\scriptsize $18$}}
                    }
                }
            ;
            \node at (0,-2.4) {\normalsize $T$};
        \end{scope}
        \begin{scope}
            [xshift=4.5cm, yshift=2cm, font=\scriptsize,
            level 1/.style={sibling distance=2cm},
            level 2/.style={sibling distance=1.4cm},
            level 3/.style={sibling distance=1.4cm},
            level 4/.style={sibling distance=.8cm},
            level 5/.style={sibling distance=.8cm},
            level 6/.style={sibling distance=.7cm},
            level 7/.style={sibling distance=.7cm}]
            \node [part2] {$1$}
                child{node [part2] {$2$}
                    child{node [part2] {$5$}
                    }
                    child{node [part2] {$8$}
                    }
                }
                child{node [part2] {$3$}
                    child{node [part1] {$4$}
                        child{node [part1] {$6$}
                            child{node [part1] {$10$}
                            }
                        }
                        child{node [part1] {$7$}
                            child{node [part1] {$13$}
                            }
                            child{node [part1] {$16$}
                            }
                            child{node [part2] {$9$}
                                child{node [part2] {$11$}
                                    child{node [part2] {$12$}
                                    }
                                    child{node [part2] {$21$}
                                    }
                                }
                                child{node [part2] {$15$}
                                }
                                child{node [part2] {$14$}
                                    child{node [part2] {$19$}
                                        child{node [part2] {$20$}
                                        }
                                    }
                                    child{node [part2] {$17$}
                                    }
                                    child{node [part1] {$18$}
                                    }
                                }
                            }
                        }
                    }
                }
            ;
            \node at (1,-6.4) {\normalsize $\spl\big( T,18;S,14 \big)$};
        \end{scope}
    \end{tikzpicture}\]
    \caption{The \emph{splice} of two trees ($S$ is given in Figure~\ref{fig:VDec}; \emph{splicing} vertices are underlined).}
\label{egsplice}
\end{figure}

An example of the splice operation is given in Figure~\ref{egsplice}.

Note in the construction of $R$ above that the vertex-set of $R$ is the (disjoint) union of the vertex-sets of $T_1$ and $T_2$.
Also, if $\be_1\ge 1$, equivalent to $b_1<a_1$, then the root vertex of R is $b_1$, the same as the root vertex of $T_2$. However, if $\be_1=0$, equivalent to $a_1<b_1$, then the root vertex of R is $a_1$, the same as the root vertex of $T_1$.

In the following result we record some simple but important properties of the splice operation.

\begin{lemma}\label{inheritancestor}
Suppose that $R = \spl(T_1,v_1;T_2,v_2)$. 
\begin{itemize}
\item[\emph{(a)}]
For $\ell = 1,2$, and any pair of vertices $i$ and $j$  in $T_{\ell}$, then $i$ is an ancestor of $j$ in $T_{\ell}$ if and only if $i$ is an ancestor of $j$ in $R$.
Consequently, given the sets $V(T_1)$ and $V(T_2)$,
one can recover $T_1$ and $T_2$ from $R$.
\item[\emph{(b)}]
For all vertices $i$ in $T_1$, we have $\si_i(R)=\si_i(T_1)$. For the vertex $v_2$ in $T_2$, we have $\si_{v_2}(R)=\si_{v_2}(T_2)+1$, but for all other vertices $i$ in $T_2$, we have $\si_i(R)=\si_i(T_2)$.
\end{itemize}
\end{lemma}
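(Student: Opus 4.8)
The plan is to work directly from the splice construction in Definition~\ref{Def_Splice}, which specifies $R$ by giving its $v_1$-decomposition as the interleaved list~\eqref{spldecomp}. The whole proof is essentially a careful bookkeeping of which edges of $R$ are ``new'' (created by the splice) versus ``inherited'' from $T_1$ or $T_2$, together with the observation that the $v_1$-decomposition of a tree records exactly the father relation along the distinguished chain and nowhere else.

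For part~(a), first I would note that inside each component tree $T_\ell^{(a_i)}$ (resp.\ $T_\ell^{(b_j)}$) the ancestor relation is untouched, since these components appear verbatim as subtrees of $R$. So the only thing to check is the ancestor relation between vertices lying in \emph{different} components of the same $T_\ell$. For such a pair, the ancestor relation in $T_\ell$ is determined by the position of their roots along the chain $a_1 < \cdots < a_k$ (resp.\ $b_1 < \cdots < b_m$): $a_i$ is an ancestor of $a_{i'}$ in $T_1$ iff $i \le i'$, and the edges joining consecutive $a$'s on the chain of $R$ are exactly the edges $(a_i, a_{i+1})$ in the list~\eqref{spldecomp} where $a_i$ and $a_{i+1}$ are \emph{consecutive} in that list — and when they are not consecutive (because some $b$'s have been inserted between them), the vertex $a_i$ is still an ancestor of $a_{i+1}$ through the inserted $T_2$-components, because the list~\eqref{spldecomp} respects the order $a_1 < \cdots < a_k$. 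The key point is that the list~\eqref{spldecomp} is a shuffle of the two chains $(a_i)$ and $(b_j)$ preserving the relative order within each, so for $i,j$ in the same $T_\ell$, $i$ is an ancestor of $j$ in $R$ iff the root of $i$'s component precedes the root of $j$'s component in~\eqref{spldecomp} and $i$ is an ancestor of $j$ inside the chain structure — which reduces back to the ancestor relation in $T_\ell$. The ``consequently'' clause then follows: since the ancestor relations of $R$ restricted to $V(T_1)$ and to $V(T_2)$ coincide with those of $T_1$ and $T_2$, and a rooted tree on a given vertex set is determined by its ancestor relation (each vertex's father being its $\le$-maximal proper ancestor), knowing $V(T_1)$ and $V(T_2)$ lets us reconstruct $T_1$ and $T_2$ from $R$.

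For part~(b), I would count the father-edges incident to each vertex as a son. For a vertex $i$ in $T_1$ that is not on the chain $a_1 < \cdots < a_k$, it lies in the interior of some component $T_1^{(a_j)}$ and its children are unchanged, so $\sigma_i(R) = \sigma_i(T_1)$. For a chain vertex $a_j$ with $j < k$: in $T_1$ its children are those inside $T_1^{(a_j)}$ plus the chain-child $a_{j+1}$; in $R$, by~\eqref{spldecomp}, the component immediately following $T_1^{(a_j)}$ is again rooted at $a_{j+1}$ (possibly after some $T_2$-components, but those get attached further down — one needs to look at where in the shuffle $a_{j+1}$ lands relative to the $b$'s; in all cases the father of $a_{j+1}$ in $R$ is $a_j$ because no $b_{j'}$ with $a_j < b_{j'} < a_{j+1}$ can occur, as the $b$'s interleaved between $a_j$ and $a_{j+1}$ in~\eqref{spldecomp} are all $> a_{j+1}$ by the displayed inequality — wait, they are $< a_{j+1}$; the point rather is that they are inserted as a block and the father of $a_{j+1}$ remains $a_j$ by the shuffle structure). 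I would make this precise by simply reading off from~\eqref{spldecomp} that each $a_j$ ($j<k$) has $a_{j+1}$ as a child and $a_k = v_1$ has no chain-child, matching $T_1$. Similarly each $b_j$ with $j < m$ keeps $b_{j+1}$ as a chain-child, and $b_m = v_2$: in $T_2$ it has no chain-child, but in $R$ the component $T_1^{(a_1)}$ (the first $T_1$-block) gets attached as a child of $v_2 = b_m$ precisely when... no — one reads from~\eqref{spldecomp} that $v_2 = b_m$ is immediately followed, in the final segment of the list, by $T_1^{(a_{\alpha_{\ell-1}+1})}$, so $v_2$ gains exactly one new child, the root $a_{\alpha_{\ell-1}+1}$; hence $\sigma_{v_2}(R) = \sigma_{v_2}(T_2) + 1$, and all other $b_j$ keep their child count.

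The main obstacle I anticipate is the case analysis in part~(b) around the ``seam'' vertices, i.e.\ verifying in each block of the shuffle~\eqref{spldecomp} exactly which vertex becomes the new father of each block's root, keeping careful track of the edge cases $\beta_1 = 0$ versus $\beta_1 \ge 1$ (noted just before the lemma) and the last block. This is genuinely routine once one stares at~\eqref{spldecomp}, but it is fiddly; the cleanest way to organize it is to prove once and for all that in the $v_1$-decomposition of any tree, consecutive components' roots are joined by a father-edge and these are the only father-edges along the chain, and then simply apply that statement to~\eqref{spldecomp}, comparing block-by-block with the $v_1$-decomposition of $T_1$ and the $v_2$-decomposition of $T_2$.
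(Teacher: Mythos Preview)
Your treatment of part~(a) is essentially the paper's proof: you split into the case where $i,j$ lie in the same component of the decomposition (ancestry is local and unchanged) and the case where they lie in different components (ancestry reduces to ``$i$ equals the root $a_I$ of its component and $I<J$''), and then observe that the shuffle~\eqref{spldecomp} preserves the relative order of the $a$-components and of the $b$-components. This is exactly how the paper argues it, only more tersely.

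Part~(b), however, has a genuine gap. Your argument rests on the claim that ``the father of $a_{j+1}$ remains $a_j$'' in $R$, and likewise that each $b_j$ with $j<m$ ``keeps $b_{j+1}$ as a chain-child''. This is false at the seams of the shuffle. Reading~\eqref{spldecomp} literally, the chain of $R$ from the root to $v_1$ is
\[
b_1,\ldots,b_{\beta_1},\,a_1,\ldots,a_{\alpha_1},\,b_{\beta_1+1},\ldots,b_{\beta_2},\,a_{\alpha_1+1},\ldots
\]
so, for instance, the father of $a_{\alpha_1+1}$ in $R$ is $b_{\beta_2}$, not $a_{\alpha_1}$; and the chain-child of $b_{\beta_1}$ in $R$ is $a_1$, not $b_{\beta_1+1}$. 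The father relation genuinely changes at the seams, and your attempt to argue otherwise (including the mid-sentence ``wait'') cannot be repaired as stated.

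The correct observation --- and presumably what the paper has in mind when it says ``immediate'' --- is that you should count chain-children rather than identify them. Every vertex on the chain of $R$ except the terminal vertex $v_1=a_k$ has exactly one chain-child (the next root in the list~\eqref{spldecomp}); in $T_1$, every $a_j$ with $j<k$ has exactly one chain-child and $a_k$ has none; in $T_2$, every $b_j$ with $j<m$ has exactly one chain-child and $b_m=v_2$ has none. Comparing: all $a_j$ keep their chain-child count, all $b_j$ with $j<m$ keep theirs, and $v_2=b_m$ goes from $0$ to $1$. Since the children inside each component $T_\ell^{(\cdot)}$ are manifestly unchanged, this gives the claim. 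You correctly anticipated that the seam vertices are the delicate point, but the resolution is a counting argument, not a preservation-of-edges argument.
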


\begin{proof}
(a) For $\ell=1$, consider the $v_1$-decomposition of $T_1$: $T_1^{(a_1)},\ldots ,T_1^{(a_k)}$.
Let $I$ and $J$ denote the indices such that $i \in T_1^{(a_I)}$ and $j \in T_1^{(a_J)}$.
\begin{itemize}
    \item If $I=J$ then $i$ is an ancestor of $j$ in $T_1$ if and only if $i$ is an ancestor of $j$ in $T_1^{(a_I)}$.
        The same is true in $R$.
    \item If $I \neq J$, then $i$ is an ancestor of $j$ in $T_1$ if and only if $I<J$ and $i=a_I$.
        The same is true in $R$.
\end{itemize}
In both cases, we see that $i$ is an ancestor of $j$ in $T_1$
if and only if $i$ is an ancestor of $j$ in $R$.
This ends the proof of part~(a) for $\ell=1$. The case $\ell=2$ is similar.

%
%
%
%

(b) This is immediate.
\end{proof}

\subsection{A candidate for our bijection.}

In this section we describe a mapping $\psipi$ that we claim is a suitable bijection for~\eqref{mainbijn}. 
To describe $\psipi$, consider a set partition $\pi\in\Pi_1(\N_r)$ with $|\pi|=k$, so $\pi$ has blocks $\pi_1=\{ 1\},\ld ,\pi_k$, and $1=\mu_1<\cd <\mu_k$ where $\mu_i$ is the largest element of $\pi_i$, for $i=1,\ld ,k$. Recall that $r\geq 2$ (and hence $k\geq 2$). Consider also a $(k-2)$-tuple $c=(c_2,\ldots ,c_{k-1})\in\mcC(\pi)$.  We apply an iterative procedure in which we have a forest of unordered increasing trees on vertex-set $\N_r$ at every stage, and we apply \emph{splice} to reduce the number of components by one between successive stages. 

\begin{construction}\label{mapping}
Initially, at Stage 1, we have the forest with components $\tau_1, \ldots ,$ $\tau_k$, where $\tau_{\ell}$ is the increasing chain whose vertices are the elements of the set $\pi_{\ell}$, $\ell =1,\ld ,k$ (so $\tau_1$ consists of the single vertex $1$). At every stage we also keep track of an integer $\nu$ in $[r]$, with $\nu =1$ initially. Then, at Stage $i$, for $i=2,\ldots ,k-1$, we input a forest with components $\tau_1$, $\tau_i, \tau_{i+1},\ldots ,\tau_k$, together with an integer $\nu$, and create the following output:
Iwahori-Heckesp
\begin{description}
\item[Case 1.]
If $c_i$ is a vertex in $\tau_1$ or $\tau_i$, then set $\tau_1 = \spl(\tau_i,\mu_i;\tau_1,\nu)$, omit $\tau_i$, and set $\nu=c_i$;
\item[Case 2.]
If $c_i$ is a vertex in $\tau_j$ for some $j>i$, then set $\tau_j= \spl(\tau_i,\mu_i;\tau_j,c_i)$, and omit $\tau_i$.
\end{description}
After completion of the above procedure, we are left with a pair of increasing rooted trees $\tau_1$ and $\tau_k$, and an integer $\nu$. Then finally, at Stage $k$, we let
\begin{equation}\label{psidef}
\psipi(c)=T, \qquad\mbox{where}\quad T=\spl(\tau_k,\mu_k;\tau_1,\nu).
\end{equation}
\end{construction}

\begin{remark}
During the construction, $\nu$ is always a vertex of $\tau_1$ and moreover $\nu \leq \mu_i$ after Stage $i$.
Hence the splice $\spl(\tau_i,\mu_i;\tau_1,\nu)$ is well-defined in Case 1. 
Similarly, since $c_i \le \mu_i$, the splice in Case 2 is well-defined 
(we cannot have $c_i = \mu_i$ in Case 2, as this would imply that $c_i$ is a vertex of $\tau_i$).
\end{remark}

An example of the mapping $\psipi$ is given in Figure~\ref{egpsi} where $r=9$ and $k=4$. 
At Stage 2, we applied Case 1 because $c_2=4$ was a vertex of $\tau_2$, while, at Stage 3, we applied Case 2 because $c_3=5$ was a vertex of $\tau_4$.
At each stage, the value of $\nu$ is recorded by an edge on $\tau_1$ without child.
\begin{figure}[t]
    \[\begin{array}{c|c}
        \begin{array}{c}
            \text{Initially: Stage 1} \vspace{2mm} \\
        \begin{tikzpicture}[font=\scriptsize]
            \node [root] {$1$} child;
            \node at (1,0) [part1] {$3$}
                child{ node [part1] {$4$}
                    child{node [part1] {$6$}
                    }
                }
            ;
            \node at (2,0) [part2] {$2$}
                child{ node [part2] {$7$}
                    child{node [part2] {$8$}
                    }
                }
            ;
            \node at (3,0) [part3] {$5$}
                child{ node [part3] {$9$}
                }
            ;
            \node at (0,-2.2){\normalsize $\tau_1$};
            \node at (1,-2.2){\normalsize $\tau_2$};
            \node at (2,-2.2){\normalsize $\tau_3$};
            \node at (3,-2.2){\normalsize $\tau_4$};
        \end{tikzpicture}
    \end{array}
    &
    \begin{array}{c}
        \text{Stage 2 ($c_2=4$)} \vspace{2mm}\\
        \begin{tikzpicture}[font=\scriptsize,
            level 3/.style={sibling distance=9mm}]
            \node at (.5,0) [root] {$1$} 
                child {node [part1] {$3$}
                    child{ node [part1] {$4$} 
                        child{node [part1] {$6$}
                        }
                        child
                    }
                }
            ;
            \node at (2,0) [part2] {$2$}
                child{ node [part2] {$7$}
                    child{node [part2] {$8$}
                    }
                }
            ;
            \node at (3,0) [part3] {$5$}
                child{ node [part3] {$9$}
                }
            ;
            \node at (0.5,-3){\normalsize $\tau_1$};
            \node at (2,-3){\normalsize $\tau_3$};
            \node at (3,-3){\normalsize $\tau_4$};
        \end{tikzpicture}
    \end{array}\\ \hline
    \begin{array}{c}
        \text{Stage 3 ($c_3=5$)} \vspace{2mm}\\
        \begin{tikzpicture}[font=\scriptsize,
            level 2/.style={sibling distance=9mm},
            level 3/.style={sibling distance=9mm}]
            \node [root] at (.5,0) {$1$} 
                child {node [part1] {$3$}
                    child{ node [part1] {$4$} 
                        child{node [part1] {$6$}
                        }
                        child
                    }
                }
            ;
            \node at (2.5,0) [part2] {$2$}
                child {node [part3] {$5$}
                    child{ node [part2] {$7$}
                        child{node [part2] {$8$}
                        }
                    }
                    child{ node [part3] {$9$}
                    }      
                }
            ;
            \node at (.5,-3){\normalsize $\tau_1$};
            \node at (2.5,-3){\normalsize $\tau_4$};
        \end{tikzpicture}
    \end{array}&
    \begin{array}{c}
            \vspace{-3mm} \hbox{ } \\
        \text{Finally: Stage 4} \vspace{2mm} \\
        \begin{tikzpicture}[font=\scriptsize,
            level 5/.style={sibling distance=9mm}]
            \node [root] at (.5,0) {$1$} 
                child{node [part2] {$2$}
                    child{node [part1] {$3$}
                        child{ node [part1] {$4$}
                            child{node [part1] {$6$}
                            }
                            child{node [part3] {$5$}
                                child{node [part2] {$7$}
                                    child{node [part2] {$8$}
                                    }
                                }
                                child{ node [part3] {$9$}
                                }
                            }
                        }
                    }
                }
            ;
            \node at (1.8,-4.8) {\normalsize $T$};
        \end{tikzpicture}
    \end{array}
    \end{array}\]
\caption{Applying $\psipi$ to $c=( 4, 5)$, when $\pi$ has blocks $\pi_1=\{ 1\}$, $\pi_2=\{3,4,6\}$, $\pi_3=\{ 2,7,8\}$, $\pi_4 =\{ 5,9\}$.}
\label{egpsi}
\end{figure}
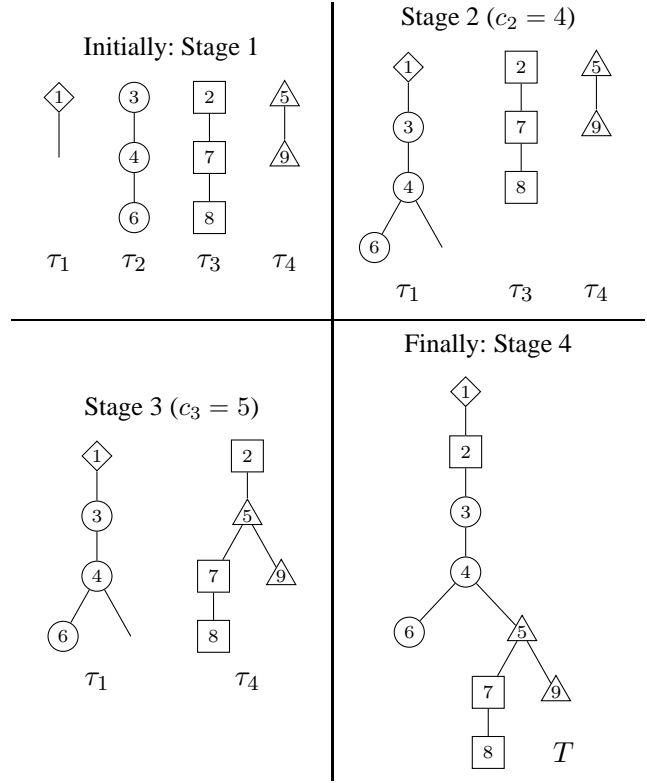

\begin{proposition}\label{psiimageok}
Given $r\ge 2$, a set partition $\pi\in\Pi_1(\N_r)$, and $c\in \mcC (\pi)$, suppose that $\psipi (c)=T$ is constructed as in~\eqref{psidef} above. Then
$$ T\in\mcE(\pi), \qquad\mbox{and}\qquad \ka(T)=\om(c,\pi).$$
\end{proposition}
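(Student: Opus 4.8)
The plan is to verify the two assertions separately, in each case by following the relevant quantity through the $k-1$ applications of $\spl$ made in Construction~\ref{mapping}: one splice at each of Stages $2,\dots,k-1$, plus the concluding splice at Stage~$k$. Throughout I will use the two parts of Lemma~\ref{inheritancestor}, and in particular the following consequence of part~(b): if $R=\spl(T_1,v_1;T_2,v_2)$ then
\[
\ka(R) \;=\; \ka(T_1)\,\ka(T_2)\,x_{v_2},
\]
because splicing changes no $\si_i$ except $\si_{v_2}$, which increases by exactly $1$. I will also freely use the side conditions recorded in the Remark after Construction~\ref{mapping} (that $\nu$ is always a vertex of $\tau_1$, that $\nu\le\mu_{i-1}<\mu_i$ on entering Stage~$i$, and that $c_i<\mu_i$ in Case~2), which guarantee that every splice invoked is well-defined.

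For $T\in\mcE(\pi)$: I would first observe that at every stage the vertex-set of each component in the current forest is a union of blocks of $\pi$ --- this holds at Stage~1, where $V(\tau_\ell)=\pi_\ell$, and it is preserved by each splice since $\spl$ merges two components into one whose vertex-set is the disjoint union of the two. Hence at every stage each block of $\pi$ is contained in exactly one component. Now I induct on the stage, proving that after each stage every block $\pi_\ell$ contained in a component $C$ is a subchain of $C$. The base case is immediate, since each $\tau_\ell$ is the increasing chain on $\pi_\ell$. For the inductive step, a splice replaces two components $T_1,T_2$ by $R=\spl(T_1,v_1;T_2,v_2)$ and leaves all other components untouched; a block $\pi_\ell\subseteq V(R)$ lies entirely in $V(T_1)$ or entirely in $V(T_2)$, say in $V(T_1)$, where by induction it was a subchain, and Lemma~\ref{inheritancestor}(a) says ancestry among vertices of $T_1$ is unchanged in $R$, so $\pi_\ell$ is still a subchain of $R$. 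After Stage~$k$ the forest is the single tree $T$, which is an unordered increasing tree on $\N_r$ by the definition of $\spl$, and every block of $\pi$ is a subchain of it; thus $T\in\mcE(\pi)$.

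For $\ka(T)=\om(c,\pi)$: I would carry the induction with the invariant that, after Stage~$i$ (for $1\le i\le k-1$), with $\nu$ the current tracked integer and $\tau_\ell$ the current components,
\[
x_{\nu}\,\prod_{\ell}\ka(\tau_\ell) \;=\; \frac{\big(\prod_{j=1}^{r}x_j\big)\big(\prod_{j=2}^{i}x_{c_j}\big)}{\prod_{\ell=2}^{k}x_{\mu_\ell}}.
\]
The base case $i=1$ follows because $\tau_\ell$ is a chain with unique leaf $\mu_\ell$, so $\ka(\tau_\ell)=\big(\prod_{j\in\pi_\ell}x_j\big)\big/x_{\mu_\ell}$, and $x_{\mu_1}=x_1$ cancels the prefactor $x_{\nu}=x_1$. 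For the inductive step one uses the displayed formula for $\ka(R)$ to check that a Stage~$i$ splice in Case~1 --- which replaces the factor $\ka(\tau_1)$ in the product by $\ka(\tau_i)\,\ka(\tau_1)\,x_{\nu}$ (absorbing and deleting $\tau_i$) and then resets $\nu$ to $c_i$ --- and one in Case~2 --- which replaces some factor $\ka(\tau_j)$ by $\ka(\tau_i)\,\ka(\tau_j)\,x_{c_i}$ and leaves $\nu$ unchanged --- both multiply the left-hand side by exactly $x_{c_i}$, matching the change $\prod_{j=2}^{i-1}x_{c_j}\mapsto\prod_{j=2}^{i}x_{c_j}$ on the right. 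Finally, the Stage~$k$ splice gives $\ka(T)=\ka(\tau_k)\,\ka(\tau_1)\,x_{\nu}$, which is exactly the left-hand side of the invariant at $i=k-1$; its value $\big(\prod_{j=1}^{r}x_j\big)\big(\prod_{j=2}^{k-1}x_{c_j}\big)\big/\prod_{\ell=2}^{k}x_{\mu_\ell}$ equals $\om(c,\pi)$ since $|\pi|=k$ and $\mcS=\N_r$.

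I expect the delicate points to be purely bookkeeping: keeping the exponents of the $x_{\mu_\ell}$ and the $x_{c_i}$ correct through the telescoping (in particular the boundary roles of $x_1=x_{\mu_1}$ and of $\mu_k=r$), and checking that every Case~1/Case~2 splice in the inductive step really does contribute the single factor $x_{c_i}$. None of this requires an idea beyond Lemma~\ref{inheritancestor} and the Remark; the substance of the argument is that each of the $k-1$ splices both preserves membership in $\mcE(\pi)$ (via part~(a)) and multiplies the component-weight product by a well-controlled monomial (via part~(b)).
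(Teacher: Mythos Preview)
Your proposal is correct and follows essentially the same approach as the paper: both parts rest on Lemma~\ref{inheritancestor}, with part~(a) giving preservation of subchains under each splice and part~(b) giving the multiplicative effect $\ka(R)=\ka(T_1)\ka(T_2)x_{v_2}$. Your argument is a more carefully written-out version of the paper's; in particular, where the paper simply asserts that the $k-1$ splices together contribute the factor $x_1 x_{c_2}\cdots x_{c_{k-1}}$, you make this explicit by tracking the invariant $x_{\nu}\prod_{\ell}\ka(\tau_\ell)$ through the stages, which neatly handles the telescoping of the $\nu$-values in Case~1.
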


\begin{proof}
By construction, it is clear that $T$ is a tree with vertex-set $\N_r$.
We have to check that it is indeed in $\mcE(\pi)$.
Initially, in the iterative procedure for $\psipi$, we have components $\tau_i$,  the chain consisting of the elements of the block $\pi_i$ of $\pi$, $i=1,\ld ,k$. The rooted tree $T$ is constructed by applying the splice operation $k-1$ times, to join the initial components together in some order. The fact that $T\in\mcE(\pi)$ now follows immediately from Lemma~\ref{inheritancestor}(a). 

Also, initially, we have 
$$\ka(\tau_1)\cd\ka(\tau_k)=\frac{1}{x_{\mu_2}\cd x_{\mu_{k}}} \prod_{i=2}^r\; x_i.$$
But for the terminating tree $T$, from Lemma~\ref{inheritancestor}(b), we have
$$\ka(T)=x_1x_{c_2}\cd x_{c_{k-1}}\ka(\tau_1)\cd\ka(\tau_k),$$
and the fact that $\ka(T)=\om(c,\pi)$ now follows immediately from~\eqref{omdef} (in this case we have $\mcS=\N_r$).
\end{proof}

Comparing Proposition~\ref{psiimageok} with~\eqref{mainbijn}, we see that the mapping $\psipi$ above is indeed a candidate for a bijective proof of our main result.

\begin{theorem}\label{thm_bijn}
For each $r\ge 2$ and $\pi\in\Pi_1(\N_r)$, the mapping
$$\psi_{\pi}: \mcC(\pi)\rightarrow\mcE(\pi)$$
is a bijection.
\end{theorem}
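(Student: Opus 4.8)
The plan is to show that $\psi_\pi$ is a bijection by constructing an explicit inverse, which amounts to reversing the construction. Since both $\mcC(\pi)$ and $\mcE(\pi)$ can be infinite only in the sense of weights (they are actually finite sets for fixed $\pi$), it would in principle suffice to prove injectivity or surjectivity alone; but the cleanest route is to describe the inverse map directly, peeling off one splice at a time. The key enabling fact is Lemma~\ref{inheritancestor}(a): given a tree $R = \spl(T_1,v_1;T_2,v_2)$ together with the partition $\{V(T_1),V(T_2)\}$ of its vertex set, one can recover $T_1$ and $T_2$. So the strategy is: starting from $T = \psi_\pi(c) \in \mcE(\pi)$, we know the blocks $\pi_1,\ldots,\pi_k$ of $\pi$, hence we know which vertices belong to which $\tau_\ell$ at Stage~1; the problem is to recover, at each stage going backwards, both the vertex-set partition needed to invert the last splice and the value of $c_i$ (and of the auxiliary integer $\nu$).

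First I would run the construction forwards once more in the abstract and record an invariant: after Stage~$i$ (for $i=2,\ldots,k-1$) the surviving components are $\tau_1$ and $\tau_{i+1},\ldots,\tau_k$, where the vertex set of the current $\tau_1$ is exactly $\pi_1 \cup \pi_2 \cup \cdots \cup \pi_i$ together with whatever blocks $\pi_j$, $j > i$, have already been absorbed in Case~1 at earlier stages — wait, more carefully: in Case~2 at Stage~$i$ the block $\pi_i$ is spliced into some $\tau_j$ with $j>i$, so the bookkeeping of "which original blocks now sit inside which current component" must be tracked. I would formalize this by defining, for each stage, a set partition of $\N_r$ refining $\pi$ whose blocks are the vertex-sets of the current components, and observe that this partition is determined by $\pi$ and by $c_2,\ldots,c_{i}$ alone. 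The inversion then proceeds from Stage~$k$ downwards: at the final stage $T = \spl(\tau_k,\mu_k;\tau_1,\nu)$ and we know $V(\tau_k) = \pi_k$, so by Lemma~\ref{inheritancestor}(a) we recover $\tau_1$ and $\tau_k$; moreover $\nu$ is recovered as the splicing vertex $v_2$, which is read off from the $\mu_k$-decomposition of $T$ (it is the position where the $\tau_k$-blocks were inserted). Continuing, at Stage~$i$ we must decide whether Case~1 or Case~2 occurred: Case~1 happened iff $\tau_i$ was spliced into $\tau_1$, i.e. iff $V(\tau_i) = \pi_i$ merged into the growing first component; Case~2 happened iff $\pi_i$ merged into one of the $\tau_j$, $j>i$. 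This is detectable because we know $\pi$, we know which vertices are in the current $\tau_1$, and the blocks $\pi_j$ for $j>i$ are still intact chains (they have not yet been modified). Having determined the case, Lemma~\ref{inheritancestor}(a) recovers the two trees spliced, and $c_i$ is recovered as $\nu$ (in Case~1, via the recovered second splicing vertex) or as the second splicing vertex $c_i$ in the $\mu_i$-decomposition (in Case~2).

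The main obstacle I anticipate is the case-distinction bookkeeping: showing rigorously that at each backward stage one can tell whether Case~1 or Case~2 was applied, and that the partition-of-vertex-sets invariant is correctly maintained under both cases. This requires care because in Case~2 a block $\pi_i$ with $i < k$ gets buried inside a component indexed by a larger block, and that component may itself later be spliced into $\tau_1$ at a still-later stage; one must verify that the "still intact chains" characterization of the not-yet-processed blocks survives, and that the recovered splicing vertices genuinely coincide with the recorded $c_i$ and $\nu$. Once the inverse is shown to be well-defined, that $\psi_\pi \circ \psi_\pi^{-1} = \id$ and $\psi_\pi^{-1}\circ\psi_\pi = \id$ follow by matching the forward and backward steps stage by stage. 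An alternative, possibly shorter, finish: by Proposition~\ref{psiimageok} the map is weight-preserving ($\ka(\psi_\pi(c)) = \om(c,\pi)$), and one could instead prove surjectivity of $\psi_\pi$ directly (every $T \in \mcE(\pi)$ arises) and then invoke a counting argument — but since $\mcE(\pi)$ and $\mcC(\pi)$ need not obviously have equal cardinality without already knowing the theorem, the explicit-inverse route is safer and is the one I would pursue.
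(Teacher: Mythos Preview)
Your inversion strategy has a genuine gap at the very first step. You write that at Stage~$k$ we know $V(\tau_k)=\pi_k$, so Lemma~\ref{inheritancestor}(a) recovers $\tau_1$ and $\tau_k$. But $V(\tau_k)=\pi_k$ is false in general: every time Case~2 is applied at some Stage~$i<k$ with $c_i\in\tau_k$, the block $\pi_i$ (and anything previously spliced into $\tau_i$) is absorbed into $\tau_k$. By the time the final splice is performed, $\tau_k$ may carry many blocks of $\pi$, and you have no way of knowing which ones without already knowing $c$. The same problem recurs at every backward stage: your claim that ``the blocks $\pi_j$ for $j>i$ are still intact chains'' is also incorrect, since those components may already have been enlarged by Case~2 splices at stages $<i$. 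Lemma~\ref{inheritancestor}(a) only lets you undo a splice \emph{once you know the bipartition of the vertex set}, and determining that bipartition is precisely the hard part.

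The paper resolves this with a structural invariant you are missing: a notion of \emph{irreducibility} for $\pi$-increasing trees, defined via a dependence graph on the blocks of $\pi$. The key facts are that at every stage the components $\tau_i,\ldots,\tau_k$ are irreducible while $\tau_1$ (once it has received a splice) is reducible, and that a reducible tree with maximum vertex $M$ can be written \emph{uniquely} as $\spl(T_1,M;T_2,t)$ with $T_1$ irreducible; likewise an irreducible tree with second maximum $m$ decomposes uniquely with respect to $m$. These uniqueness statements supply the missing bipartition at each backward step and also tell you whether Case~1 or Case~2 occurred (by locating $\mu_{i}$). Without some device of this kind your plan cannot get off the ground.
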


We will prove Theorem~\ref{thm_bijn} in the next Section, by determining the inverse of $\psipi$. In our development, we will find it convenient to use terms that distinguish between the different ways in which ``splice'' is applied in Construction~\ref{mapping} -- a splice that arises in Case 1 or in~\eqref{psidef} (the final stage) is called an \emph{internal} splice, whereas one that arises in Case 2 is called an \emph{external} splice.

\section{Inverting the combinatorial mapping and a bijective proof of the main result}

The goal of this section is to construct the inverse of $\psipi$ in order to show
that it is a bijection. Throughout this Section, $\pi$ is a fixed partition in $\Pi_1(\N_r)$.
As in the previous Section, the blocks of $\pi$ are denoted by $\pi_1,\pi_2,\ldots,\pi_k$,  and the maximum elements in these blocks are denoted by $\mu_1, \mu_2,\ldots ,\mu_k$. We assume as before that $\mu_1 < \mu_2 < \cdots < \mu_k$.
When it is convenient, we will also use the notation $\pi^x$ for the block of $\pi$ containing the element $x$.
This should not be confused with $\pi_i$, which denotes the $i$-th part of $\pi$.

Consider a set $\mcS$ that is a union of blocks of $\pi$ (we shall say that $\mcS$ is $\pi$-\emph{compatible}).
In other words, $\mcS=\cup_{i \in I} \pi_i$, for some index set $I$.
We denote by $\pi_{|\mcS}$ the partition $\{ \pi_i, i\in I\}$ of $\mcS$. A tree $T\in\mcE (\pi_{|\mcS})$ for some $\pi$-compatible $S$ is said to be $\pi$-\emph{increasing}.

\subsection{Dependence graphs and irreducibility}

We begin by defining a directed graph associated with the $v$-decomposition of an unordered increasing tree.

\begin{definition}\label{def_depend}
Consider a $\pi$-increasing tree $T$, with ($\pi$-compatible) vertex-set $\mcS=\cup_{i \in I} \pi_i$. For a vertex $v$ in $T$, suppose that the $v$-decomposition of $T$ is given by
\begin{equation*}
T^{(a_1)},\ld ,T^{(a_\ell)},
\end{equation*}
where $a_{\ell}=v$. Then the $v$-\emph{dependence} graph of $T$, denoted by $\G{v}{T}$, is a directed graph with the following vertices and directed edges:
\begin{itemize}
    \item the vertex-set of $G_v(T)$ is $\{ \pi_i, i\in I\}$;
    \item for the directed edges of $G_v(T)$, consider, if any, a maximum element $\mu_i$, $i\in I$, which is not contained in the
        chain $a_1,\cdots,a_\ell$ of $T$. 
       This vertex belongs (as a nonroot vertex) to $T^{(a_j)}$ for some $j=1,\ldots ,\ell$.
        Then there is a directed edge from $\pi_i$ to $\pi^{a_j}$, for each such $i\in I$.
\end{itemize}
\end{definition}

For example, fix
\[ \pi = \big\{ \{1\},\{3,4,6\}, \{2,7,8\},\{5,9\} \big\}, \]
as in Figure \ref{egpsi}.
Then $G_9(T)$, the $9$-dependence graph of the tree $T$, obtained
in the final stage of Figure \ref{egpsi}, is drawn in Figure~\ref{fig:ex_dep}.

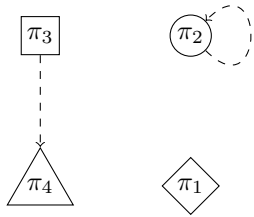
\begin{figure}
\begin{tikzpicture}
    \node[part1, inner sep=.5mm] (p1) at (2,0) {$\pi_2$ }; 
    \node[part2] (p2) at (0,0) {$\pi_3$ };
    \node[part3] (p3) at (0,-2) {$\pi_4$ };
    \node[root, inner sep=.5mm] at (2,-2) {$\pi_1$ };
    \draw[dashed,->]  (p2) -- (p3);
    \draw[dashed,->] (p1) {} .. controls +(1,-1) and +(1,1) .. (p1);
\end{tikzpicture}
\caption{Example of the $v$-dependence graph of a tree}
\label{fig:ex_dep}
\end{figure}

Note that the graph $G_v(T)$ defined above depends strongly on the partition $\pi$, but since $\pi$ is fixed throughout the section, we have omitted it from the notation.

In the remainder of this paper, we will particularly consider $v$-dependence graphs for the case in which $v$ is the vertex of maximum label. This motivates the following definition.

\begin{definition}\label{Def_irred}
A $\pi$-increasing tree $T$ with maximum vertex $v$ is called \emph{irreducible} if $G_v(T)$ is connected. Otherwise, $T$ is called \emph{reducible}.
\end{definition}
\begin{remark}
As we shall see later (Proposition~\ref{prop_Treducible}), any $\pi$-increasing tree with vertex set $[r]$ is reducible.
Therefore, the notion of irreducibility is interesting only for trees with smaller vertex sets.
\end{remark}

We now determine the form of $G_v(T)$ when $v$ is the maximum vertex of an irreducible tree $T$.   

\begin{lemma}\label{lem_conntree}
If a $\pi$-increasing tree $T$ with maximum vertex $v$ is irreducible, then the $v$-dependency graph $G_v(T)$ is an indirected tree rooted at $\pi^v$
({\em i.e.} all edges are directed towards the root).
\end{lemma}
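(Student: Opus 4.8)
The plan is to analyze the $v$-dependence graph $G_v(T)$ directly from Definition~\ref{def_depend}, using the hypothesis that it is connected, and show that every vertex other than $\pi^v$ has out-degree exactly $1$, which together with connectedness forces a tree structure with all edges pointing toward $\pi^v$. First I would set up the $v$-decomposition $T^{(a_1)},\ldots,T^{(a_\ell)}$ with $a_\ell = v$, and record the key observation that each block $\pi_i$ of $\pi_{|\mcS}$ either has its maximum $\mu_i$ lying on the chain $a_1 < \cdots < a_\ell$ (in which case, since $\mu_i$ is the largest element of $\pi_i$ and the $a_j$ are increasing with $a_\ell=v$ the maximum vertex of $T$, the block $\pi_i$ contributes no out-edge), or else $\mu_i$ is a non-root vertex of exactly one component $T^{(a_j)}$, contributing exactly one out-edge, namely $\pi_i \to \pi^{a_j}$. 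So every vertex of $G_v(T)$ has out-degree $0$ or $1$.

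Next I would identify which blocks have out-degree $0$. The vertex $\pi^v = \pi^{a_\ell}$ has $\mu$-element $v = a_\ell$ on the chain, so it has out-degree $0$. Conversely, suppose $\pi_i$ has out-degree $0$, i.e. $\mu_i$ is some $a_j$ on the chain. I want to argue that then $j = \ell$, i.e. $\pi_i = \pi^v$. The point is that $a_j$ is the root of the component $T^{(a_j)}$, but as a vertex of $T$ it is the father of $a_{j+1}$ (if $j < \ell$); more carefully, in the original tree $T$ the chain $a_1,\ldots,a_\ell$ is the maximal chain from the root to $v$, so $a_j < a_{j+1}$. Since $\mu_i = a_j$ is the \emph{maximum} of the block $\pi_i$, and $T \in \mcE(\pi_{|\mcS})$ means each block is a subchain of $T$, the element $a_j$ being the maximum of its block is consistent with $a_j$ sitting anywhere; the real constraint I need is that $v$ is the global maximum vertex of $T$, so $v = \mu$ of its own block $\pi^v$ and $v$ is on the chain as $a_\ell$. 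I would argue: if $j<\ell$, then $a_j$ has a son on the chain, namely $a_{j+1}>a_j=\mu_i$, but that does not immediately contradict out-degree $0$ — so instead the correct route is to show $\pi^v$ is the \emph{unique} sink of $G_v(T)$ by using connectedness together with the out-degree $\le 1$ property: a connected graph on $n$ vertices in which every vertex has out-degree at most $1$ has at most $n$ edges, hence exactly $n-1$ edges (being connected) with exactly one vertex of out-degree $0$, which is therefore unavoidably $\pi^v$, and the graph (ignoring directions) is then a tree. The direction claim follows because from any vertex, iterating the unique out-edge must terminate, and the only possible termination point is the out-degree-$0$ vertex $\pi^v$; so every vertex reaches $\pi^v$ by following out-edges, i.e. all edges point toward $\pi^v$.

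The main obstacle I anticipate is the bookkeeping needed to verify carefully that a block whose maximum lies on the chain contributes \emph{no} out-edge while every other block contributes \emph{exactly one}, and in particular that $\pi^v$ really is the only out-degree-$0$ vertex — this hinges on combining the combinatorial definition of $G_v(T)$ with the counting argument (connected $+$ out-degree $\le 1$ $\Rightarrow$ exactly one sink, exactly $n-1$ edges, underlying graph a tree). Once that counting lemma is in place, the rest is routine: orient the tree toward its unique sink $\pi^v$ and note that the unique out-edge at each non-root vertex is precisely the edge toward the root in that rooted tree.
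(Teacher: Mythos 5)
Your proposal is correct and follows essentially the same route as the paper's proof: observe that $\pi^v$ has out-degree $0$ because $v$ is the maximum of its block and is the root of $T^{(a_\ell)}$, that every other vertex has out-degree at most $1$, and then invoke the edge-count for connected graphs to conclude the underlying graph is a tree with all edges oriented toward the unique sink $\pi^v$. The extra care you take with the unique-sink and orientation bookkeeping is just a more explicit version of the paper's one-line parenthetical.
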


\begin{proof}
Let the $v$-decomposition of $T$ be given by $T^{(a_1)},\ld ,T^{(a_\ell)}$, where $a_{\ell}=v$, so $v$ appears as the root vertex in $T^{(a_\ell)}$. Then $v$, which is the maximum element in the block $\pi^v$, cannot appear as a nonroot vertex in any tree of the $v$-decomposition, and $\pi^v$ has outdegree $0$ in $G_v(T)$. But every other vertex has outdegree at most $1$, and since $G_v(T)$ is connected, it can only be a tree in which every other vertex has outdegree exactly $1$ (in a connected graph, the number of vertices minus the number of edges is at most $1$, and a difference of $1$ occurs only for trees).
\end{proof}

We now consider the effect on irreducibility of applying the splice operation. 

\begin{lemma}\label{lem:splice_reducible}
Suppose that $T_1$ and $T_2$ are $\pi$-increasing trees with disjoint vertex-sets, and that $T_1$ is irreducible. For $i=1,2$, let $m_i$ denote the maximum element in the vertex-set of $T_i$. Let $v_2$ be a vertex in $T_2$ with $v_2<m_1$, and let 
$$T=\spl\big( T_1,m_1;T_2,v_2\big).$$
    \begin{itemize}
        \item[\emph{(a)}] If $m_1>m_2$, then $T$ is reducible. 
        More precisely,
        \begin{equation}\label{eq:Splice_DIsjointUnionG}
         G_{m_1}(T) = G_{m_1}(T_1) \sqcup G_{v_2}(T_2). 
         \end{equation}
        \item[\emph{(b)}] If $m_1<m_2$ and $T_2$ is irreducible, then $T$ is irreducible.
    \end{itemize}
\end{lemma}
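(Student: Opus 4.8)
The plan is to analyze the $m_1$-decomposition of $T=\spl(T_1,m_1;T_2,v_2)$ and track how the dependence graph $G_{m_1}(T)$ decomposes. Recall from the definition of splice that the $m_1$-decomposition of $T$ is obtained by interleaving the $m_1$-decomposition of $T_1$, namely $T_1^{(a_1)},\ldots,T_1^{(a_k)}$ with $a_k=m_1$, and the $v_2$-decomposition of $T_2$, namely $T_2^{(b_1)},\ldots,T_2^{(b_m)}$ with $b_m=v_2$, according to the natural order on the root labels $a_1,\ldots,a_k,b_1,\ldots,b_m$. In particular, the set of component subtrees appearing in the $m_1$-decomposition of $T$ is exactly the disjoint union of the component subtrees of the $m_1$-decomposition of $T_1$ and those of the $v_2$-decomposition of $T_2$, and the chain from the root of $T$ to $m_1$ is the sorted merge of the chain $a_1,\ldots,a_k$ and the chain $b_1,\ldots,b_m$.

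For part (a), assume $m_1>m_2$. First I would observe that the vertex sets of $\G{m_1}{T_1}$ and $\G{v_2}{T_2}$ are, respectively, the blocks of $\pi$ contained in $V(T_1)$ and the blocks contained in $V(T_2)$; since these vertex sets are disjoint, the union on the right-hand side of~\eqref{eq:Splice_DIsjointUnionG} makes sense as a disjoint union of graphs on disjoint vertex sets. The edges of $G_{m_1}(T)$ come from maximum elements $\mu_i$ of blocks that do \emph{not} lie on the merged chain. The key point is that, because $m_1>m_2$, every vertex of $T_2$ (in particular every block-maximum $\mu_i$ with $\pi_i\subseteq V(T_2)$) is strictly smaller than $m_1$, so these $\mu_i$ retain exactly the same "on chain / off chain" status and the same containing component they had in the $v_2$-decomposition of $T_2$; likewise for $T_1$. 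Moreover, by Lemma~\ref{inheritancestor}(a), ancestry within $T_1$ (resp.\ $T_2$) is unchanged in $T$, so "$\mu_i$ lies off the chain and belongs as a nonroot vertex to $T^{(a_j)}$" translates verbatim to the corresponding statement inside $T_1$ (resp.\ $T_2$), and the block $\pi^{a_j}$ receiving the edge is a block of the same side. Hence every edge of $G_{m_1}(T)$ is an edge of $G_{m_1}(T_1)$ or of $G_{v_2}(T_2)$, and conversely; this gives~\eqref{eq:Splice_DIsjointUnionG}, and since both $G_{m_1}(T_1)$ and $G_{v_2}(T_2)$ are nonempty (each contains at least one block), their disjoint union is disconnected, so $T$ is reducible.

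For part (b), assume $m_1<m_2$ and $T_2$ irreducible. Now the maximum vertex of $T$ is $m_2$, so the relevant graph is $G_{m_2}(T)$, and I would prove it is connected. The natural approach is to compare $G_{m_2}(T)$ with $G_{m_2}(T_2)$. In $T_2$, splicing attaches the whole tree $T_1$ as a new subtree hanging off the vertex $v_2$; by Lemma~\ref{inheritancestor}, ancestry relations inside $T_2$ are preserved and inside $T_1$ are preserved, and the $m_2$-decomposition of $T$ is obtained from the $m_2$-decomposition of $T_2$ by replacing the component $T_2^{(b_j)}$ containing $v_2$ by a larger tree that additionally contains all of $T_1$ grafted at $v_2$. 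Since $m_1<m_2$, the vertex $m_1$ (the max of its block $\pi^{m_1}\subseteq V(T_1)$) lies off the chain from the root of $T$ to $m_2$, and it sits inside this enlarged component; all other block-maxima of $V(T_1)$ likewise lie inside $T_1$ and hence inside that component. Consequently $G_{m_2}(T)$ restricted to the blocks of $V(T_2)$ is exactly $G_{m_2}(T_2)$ — which is connected by irreducibility of $T_2$ — and each block $\pi_i\subseteq V(T_1)$ acquires an edge: since $T_1$ is irreducible, Lemma~\ref{lem_conntree} says $G_{m_1}(T_1)$ is a tree rooted at $\pi^{m_1}$ with all other blocks of $V(T_1)$ having outdegree exactly $1$ toward $\pi^{m_1}$, and these same edges persist in $G_{m_2}(T)$ (the relevant block-maxima are all $<m_1<m_2$, hence off the chain, with unchanged ancestry), while the root block $\pi^{m_1}$ now sends an edge to whichever block on the $T_2$-side contains the component it was grafted into — concretely, $m_1$ is off the chain of $T$ and belongs as a nonroot vertex to the enlarged component rooted on the $T_2$-side, so $\pi^{m_1}$ points into $G_{m_2}(T_2)$. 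Therefore every block of $V(T_1)$ is connected through $\pi^{m_1}$ into the connected graph $G_{m_2}(T_2)$, so $G_{m_2}(T)$ is connected and $T$ is irreducible.

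The main obstacle I anticipate is the bookkeeping in part (b): one must carefully verify that no block-maximum "crosses" the chain in an unexpected way and that the unique outgoing edge of the root block $\pi^{m_1}$ of $T_1$ lands on the correct component of the $T_2$-side (rather than, say, staying inside $T_1$), which requires a precise reading of where $m_1$ sits in the merged $m_2$-decomposition of $T$. The cleanest way to handle this is to prove a general lemma first: in $G_{v}(T)$ for the max vertex $v$, the component receiving the edge from a block $\pi_i$ is determined purely by which component subtree of the $v$-decomposition contains $\mu_i$ as a nonroot vertex, and then invoke the explicit description of the $m_1$-decomposition (resp.\ $m_2$-decomposition) of the spliced tree from Definition~\ref{Def_Splice} together with Lemma~\ref{inheritancestor}(a) to identify those containing components on each side.
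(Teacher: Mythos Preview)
Your argument for part~(a) is correct and essentially the same as the paper's: the $m_1$-decomposition of $T$ is, by definition of splice, the union of the $m_1$-decomposition of $T_1$ and the $v_2$-decomposition of $T_2$, and~\eqref{eq:Splice_DIsjointUnionG} follows directly from Definition~\ref{def_depend}.

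Your argument for part~(b), however, rests on a mistaken picture of the splice operation. You write that ``splicing attaches the whole tree $T_1$ as a new subtree hanging off the vertex $v_2$'', and then that the $m_2$-decomposition of $T$ differs from that of $T_2$ only in that the component containing $v_2$ has been enlarged to absorb all of $T_1$. This is not what splice does: by Definition~\ref{Def_Splice}, the chain from the root of $T$ to $m_1$ is the \emph{merge} of the chains $a_1<\cdots<a_k$ and $b_1<\cdots<b_m$, so vertices $a_w$ of $T_1$ can sit \emph{above} $v_2$ in $T$ (indeed the root of $T$ may come from $T_1$). If $m_2$ lies in the subtree $T_2^{(b_u)}$ of the $v_2$-decomposition of $T_2$, then every $a_w<b_u$ is on the chain from the root of $T$ to $m_2$, and the corresponding $T_1^{(a_w)}$ appears as a \emph{new, separate} component in the $m_2$-decomposition of $T$ --- it is not absorbed into the component containing $v_2$.

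This breaks your claim that ``these same edges [of $G_{m_1}(T_1)$] persist in $G_{m_2}(T)$''. They do not: a block-maximum $\mu_i$ lying in some $T_1^{(a_w)}$ with $a_w>b_u$ had an outgoing edge to $\pi^{a_w}$ in $G_{m_1}(T_1)$, but in $G_{m_2}(T)$ this edge is redirected to $\pi^{b_u}$, because $T_1^{(a_w)}$ now sits inside the component rooted at $b_u$. The paper's proof pins down exactly this dichotomy (which $T_1^{(a_w)}$ stay separate and which are absorbed), and then argues connectivity by observing that in the directed tree $G_{m_1}(T_1)$, every vertex either still reaches $\pi^{m_1}$ along unmodified edges (and $\pi^{m_1}$ now points to $\pi^{b_u}$), or has its path broken by a redirection --- but the redirection itself lands in $\pi^{b_u}$. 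Your overall strategy (link the $T_1$-blocks into the connected $G_{m_2}(T_2)$) is the right one, but the bookkeeping you flagged as ``the main obstacle'' is precisely where your description goes wrong; you need the finer analysis of the $m_2$-decomposition that the paper carries out.
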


\begin{proof}
(a) From~\eqref{spldecomp}, the trees in the $m_1$-decomposition of $T$ 
are either trees in the $m_1$-decomposition of $T_1$ or trees in the $v_2$-decomposition of $T_2$.
Then equality~\eqref{eq:Splice_DIsjointUnionG} follows from Definition~\ref{def_depend}.
In particular, $G_{m_1}(T)$ is not connected, and since $m_1$ is the maximum vertex in $T$, we conclude that $T$ is reducible.

\noindent
(b) Suppose that the $m_1$-decomposition of $T_1$ is $T_1^{(a_1)},\ldots ,T_1^{(a_{\ell})}$, and the $v_2$-decomposition of $T_2$ is $T_2^{(b_1)},\ldots ,T_2^{(b_n)}$. We have $a_{\ell}=m_1$, and since $m_1$ is the maximum vertex in $T_1$, then $T_1^{(a_{\ell})}$ consists of the single root vertex $a_{\ell}$. Now $T_2$ is an increasing tree, so we have $b_1<\cdots <b_n=v_2$. Also, by hypothesis we have $v_2<m_1<m_2$, and we conclude that $m_2$ is not contained in the chain $b_1,\ldots ,b_n$, which means that $m_2$ appears as a nonroot vertex in $T_2^{(b_u)}$ for some $1\leq u \leq n$. In particular, if $a_w>b_u$, then $a_w$ and $m_2$ are in different branches below $b_u$ in the tree $T$, and $T_1^{(a_w)}$ is entirely included in the tree rooted at $b_u$ in the $m_2$-decomposition of $T$.

Now the vertices of $G_{m_2}(T)$ consist of the vertices of $G_{m_1}(T_1)$
together with the vertices of $G_{m_2}(T_2)$.

To describe the edges of $G_{m_2}(T)$, we shall first consider the 
$m_2$-decomposition of $T$.
It is obtained as follows:
\begin{itemize}
  \item start with the $m_2$-decomposition of $T_2$;
  \item for each $a_w <b_u$, add the tree $T_1^{(a_w)}$.
        Indeed, the vertex $a_w$ is in the chain from the root to $m_2$
        in $T$, and the tree rooted at $a_w$ in the $m_2$-decomposition of $T$
        is the same as in the $v_1$-decomposition of $T_1$;
    \item for each $a_w >b_u$, add all vertices of $T_1^{(a_w)}$ to the tree rooted at $b_u$.
        Indeed, $a_w$ and $m_2$ are in different branches 
        below $b_u$ in the tree $T$ (because $m_2$ is in $T_2^{(b_u)}$).
        Hence, $T_1^{(a_w)}$ is entirely included in
        the tree rooted at $b_u$ in the $m_2$-decomposition of $T$.
\end{itemize}
This is illustrated in Figure \ref{fig:SpliceIsIrreducible}.
In this Figure, the $m_1$-decomposition of $T$
(which is by definition the union of the $m_1$-decomposition of $T_1$
and the $v_2$-decomposition of $T_2$)
is represented with blue and red dashed lines.
The $m_2$-decomposition of $T$ is drawn with plain black lines.
Finally, we have used green dotted lines for 
the $m_2$-decomposition of $T_2$ 
(it should be understood that the tree rooted at $b_u$ in this decomposition
contains only the vertices of $T_2$ in the corresponding green
dotted shape and of course no vertices of $T_1$).

    \begin{figure}
          \centering
          \def\svgwidth{200pt}
          \ifpdf
              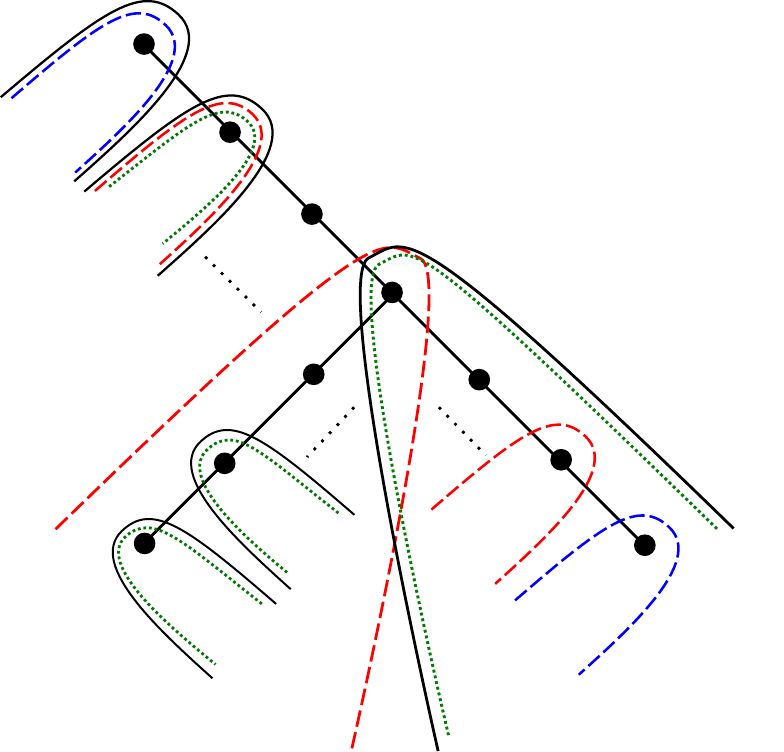
          \else
              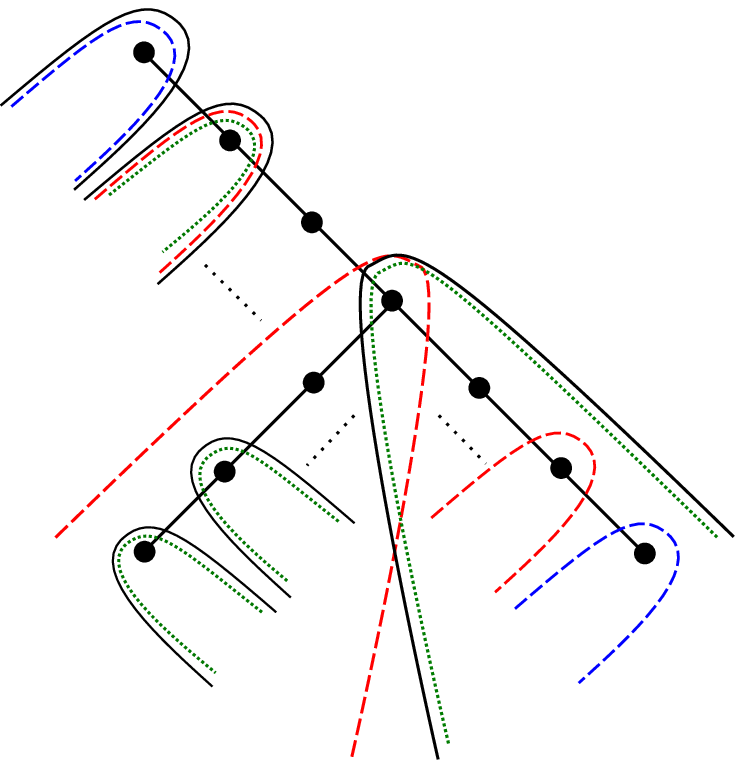
          \fi
          \caption{Illustration of the proof of Lemma \ref{lem:splice_reducible}}
      \label{fig:SpliceIsIrreducible}
    \end{figure}

From this, we can describe the edges of $G_{m_2}(T)$:
\begin{itemize}
    \item start with the edges of $G_{m_1}(T_1)$ together with the edges of $G_{m_2}(T_2)$;
    \item for any maximum element $\mu_i\neq m_1$ that appears as a vertex in $T_1^{(a_w)}$ for any $a_w>b_u$, remove the edge from $\pi_i$ to $\pi^{a_w}$ that appears in $G_{m_1}(T_1)$, and insert an edge from $\pi_i$ to $\pi^{b_u}$;
    \item finally, insert an edge from $\pi^{m_1}$ to $\pi^{b_u}$. 
\end{itemize}
Recall that $G_{m_1}(T_1)$ and $G_{m_2}(T_2)$ are both connected by hypothesis,
and we want to show that $G_{m_2}(T)$ is connected.
To do this, we will show that each vertex is in the connected component of $\pi^{b_u}$.
For vertices in $G_{m_2}(T_2)$, this is obvious as $G_{m_2}(T)$ contains all edges of $G_{m_2}(T_2)$.
From Lemma \ref{lem_conntree}, $G_{m_1}(T_1)$ is a directed tree of root $\pi^{m_1}$.
Then, for any vertex $v$ in $G_{m_1}(T_1)$  either the path from $v$ to $m_1$ is also in $G_{m_2}(T)$ and then the edge
from $\pi^{m_1}$ to $\pi^{b_u}$ proves that $v$ and $\pi^{b_u}$ are in the same connected component,
or this path is broken because one of its edge has been replaced by an edge to $\pi^{b_u}$.
In this case, the same conclusion that  $v$ and $\pi^{b_u}$ are in the same connected component holds.
Thus  $G_{m_2}(T)$ is connected, and since $m_2>m_1$, $m_2$ is the maximum vertex in $T$, so we conclude that $T$ is irreducible.
\end{proof}

\begin{example}
We give illustrations of Lemma~\ref{lem:splice_reducible} in both cases (a) and (b):

\noindent (a) Call $T_1$ and $T_2$ the trees $\tau_4$ and $\tau_1$ respectively 
of Stage 3 from Figure~\ref{egpsi}.
Then $m_1=9$ and we choose $v_2=4$.
The graph $G_9(T_1)$ has vertices $\{\pi_3,\pi_4\}$ and an edge from $\pi_3$ to $\pi_4$,
while $G_4(T_2)$ has vertices $\{\pi_1,\pi_2\}$ and an edge from $\pi_2$ to itself.
Then $T= \spl\big( T_1,m_1;T_2,v_2\big)$ is given in Stage 4 of Figure~\ref{egpsi} and its dependence graph $G_9(T)$,
drawn in Figure~\ref{fig:ex_dep}, is indeed the disjoint union of $G_9(T_1)$ and $G_4(T_2)$.

\noindent (b) Set $\pi=\big\{\{1\},\{3,6\},\{2,7\},\{9\},\{4,8,10\},\{5,11\} \big\}$
Consider the trees $T_1$ and $T_2$ from Figure~\ref{fig:ex_splice_irreducible} and 
choose $v_2=5$.
The tree $T=\spl\big( T_1,10;T_2,5\big)$ is also drawn in Figure~\ref{fig:ex_splice_irreducible}.
The corresponding dependence graphs are also given,
showing that $T$ is indeed irreducible.
Note that, as explained in our proof,
$G_{11}(T)$ differs from the disjoint union of $G_{10}(T_1)$ and  $G_{11}(T_2)$
as follows:
the edge from $\pi_4$ to $\pi_5$ in $G_{10}(T_1)$ is replaced 
in $G_{11}(T)$ by an edge from $\pi_4$ to $\pi_6$ ;
moreover, a new edge from $\pi_5$ to $\pi_6$ has also been added.
The graph $G_{11}(T)$ is determined using the $11$-decomposition of $T$,
given in Figure \ref{fig:11-dec}.

\begin{figure}[t]
    \[\hspace{-8mm}
    \begin{array}{c|c|c}
        T_1=\begin{array}{c}
        \begin{tikzpicture}
            [font=\scriptsize]
            \node [part3] {$2$}
                child{node [part1] {$4$}
                    child{node[part3] {$7$}}
                    child{node[part1] {$8$}
                        child{node[part2] {$9$}}
                        child{node[part1] {$10$}}
                    }
		        }
            ;
        \end{tikzpicture}
        \end{array}
        &
        T_2=\begin{array}{c}
        \begin{tikzpicture}
            [font=\scriptsize]
            \node [root] {$3$}
                child{node [part4] {$5$}
                    child{node[root] {$6$}}
                    child{node[part4] {$11$}}
		        }
            ;
        \end{tikzpicture}
        \end{array}
         & 
        T=\begin{array}{c}
        \begin{tikzpicture}
            [font=\scriptsize,level 4/.style={sibling distance=12mm},
            level 5/.style={sibling distance=8mm}]
            \node [part3] {$2$}
                child{node [root] {$3$}
                    child{node [part1] {$4$}
                        child{node[part3] {$7$}}
                        child{node [part4] {$5$}
                            child{node[root] {$6$}}
                            child{node[part4] {$11$}}
                            child{node[part1] {$8$}
                                child{node[part2] {$9$}}
                                child{node[part1] {$10$}}
                            }
                        }
                    }
		        }
            ;
        \end{tikzpicture}
        \end{array}
        \\ \hline 
        G_{10}(T_1)=\begin{array}{c}
        \begin{tikzpicture}
            [font=\scriptsize]
    \node[part3] (pr) at (0,0) {$\pi_3$ }; 
    \node[part1] (p1) at (0.75,.75) {$\pi_5$ };
    \node[part2] (p2) at (1.5,0) {$\pi_4$ };
    \draw[dashed,->]  (pr) -- (p1);
    \draw[dashed,->]  (p2) -- (p1);
        \end{tikzpicture}
        \end{array}
        &
        G_{11}(T_2)=\begin{array}{c}
        \begin{tikzpicture}
            [font=\scriptsize]
    \node[root] (p3) at (0.75,0) {$\pi_2$ }; 
    \node[part4] (p4) at (0,.75) {$\pi_6$ };
    \draw[dashed,->]  (p3) -- (p4);
        \end{tikzpicture}
        \end{array}
         & 
        G_{11}(T)=  \! \begin{array}{c}
        \begin{tikzpicture}
            [font=\scriptsize]
    \node[part3] (pr) at (0,0) {$\pi_3$ }; 
    \node[part1] (p1) at (0.75,.75) {$\pi_5$ };
    \node[part2] (p2) at (1.5,0) {$\pi_4$ };
    \node[root] (p3) at (3.5,0) {$\pi_2$ }; 
    \node[part4] (p4) at (2.75,.75) {$\pi_6$ };
    \draw[dashed,->]  (pr) -- (p1);
    \draw[dashed,->]  (p2) -- (p4);
    \draw[dashed,->]  (p1) -- (p4);
    \draw[dashed,->]  (p3) -- (p4);
        \end{tikzpicture}
        \end{array}
    \end{array}
    \]
    \caption{Splicing irreducible trees together with $m_1 < m_2$
    yields a new irreducible tree.}
    \label{fig:ex_splice_irreducible}
\end{figure}
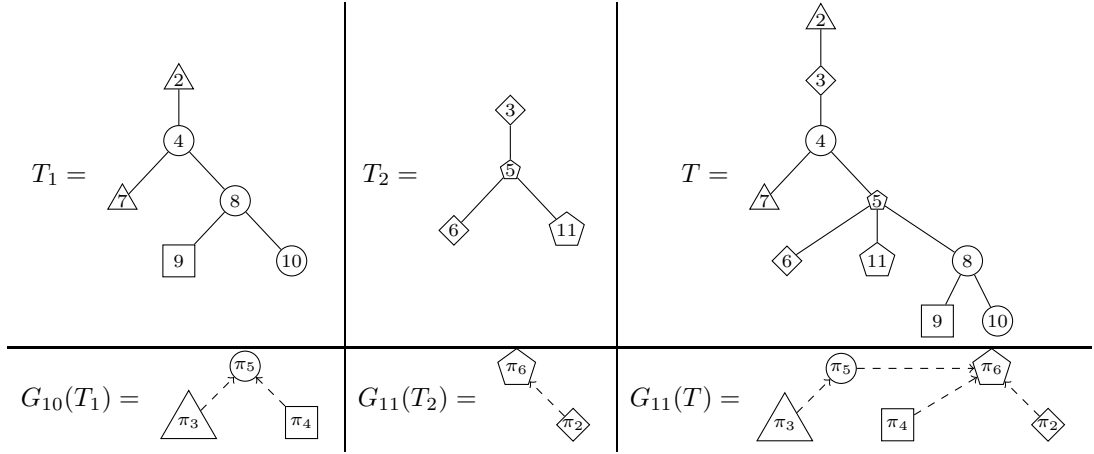
\end{example}

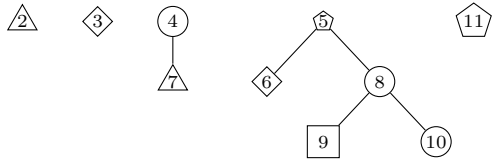
\begin{figure}[t]
    \[ \begin{tikzpicture}
            [font=\scriptsize]
            \node [part3] {$2$};
            \node at (1,0) [root] {$3$};
            \node at (2,0) [part1] {$4$}
                 child{node[part3] {$7$}};
            \node at (4,0) [part4] {$5$}
                 child{node[root] {$6$}}
                 child{node[part1] {$8$}
                      child{node[part2] {$9$}}
                      child{node[part1] {$10$}}
                 };
            \node[part4] at (6,0) {$11$};
    \end{tikzpicture}\]
    \caption{The $11$-decomposition of the tree $T$ from Figure~\ref{fig:ex_splice_irreducible}.}
    \label{fig:11-dec}
\end{figure}
The above result allows us now to classify the trees at every stage of Construction~\ref{mapping} by their irreducibility or reducibility.

\begin{proposition}\label{proposition_psiandred}

In Construction~\ref{mapping}:
\begin{itemize}
\item
At Stage 1, the trees $\tau_1,\ldots ,\tau_k$ are $\pi$-increasing and irreducible.
\item
For $i=2,\ldots ,k$, in the input to Stage $i$, the trees $\tau_i,\ldots ,\tau_k$ are $\pi$-increasing, irreducible, and contain $\mu_i,\ldots  \mu_k$, respectively; if $\tau_1$ has been created by applying an internal splice at some previous stage, then $\tau_1$ is $\pi$-increasing and reducible.
\item
The tree $\psi_{\pi}(c)=T$ is $\pi$-increasing and reducible.
\end{itemize}
\end{proposition}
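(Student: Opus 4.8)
The plan is to prove the three assertions simultaneously, by induction on the stages of Construction~\ref{mapping}, using Lemma~\ref{lem:splice_reducible} as the engine and Lemma~\ref{inheritancestor}(a) to carry the ``$\pi$-increasing'' property along. Before starting I would record one preliminary fact: a splice $\spl(T_1,v_1;T_2,v_2)$ of two $\pi$-increasing trees with disjoint vertex-sets is again $\pi$-increasing. Indeed, its vertex-set $V(T_1)\sqcup V(T_2)$ is still a union of blocks of $\pi$, and since $V(T_1)$ and $V(T_2)$ are each unions of blocks, any block of $\pi$ meeting the splice lies entirely inside $T_1$ or inside $T_2$, where it is a subchain; by Lemma~\ref{inheritancestor}(a) it is then a subchain of the splice. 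I would also fix a bookkeeping invariant, to be maintained at the input to each Stage~$i$ with $2\le i\le k$: the sets $V(\tau_1),V(\tau_i),\ldots,V(\tau_k)$ partition $\N_r$; one has $\max V(\tau_j)=\mu_j$ for $j=i,\ldots,k$ and $\max V(\tau_1)<\mu_i$; and $\nu$ is a vertex of $\tau_1$ with $\nu<\mu_i$ (the bound on $\nu$ being the Remark following Construction~\ref{mapping}). Each clause is immediate at Stage~$1$.

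The base case (Stage~$1$) is clear: each $\tau_\ell$ is the increasing chain on the block $\pi_\ell$, hence $\pi$-increasing with $\max V(\tau_\ell)=\mu_\ell$; its $\mu_\ell$-dependence graph has the single vertex $\pi_\ell$, so it is connected and $\tau_\ell$ is irreducible; and $\tau_1=\{1\}$, $\nu=1$, with $\tau_1$ not produced by an internal splice. For the inductive step at Stage~$i$, $2\le i\le k-1$, in Case~1 the tree $\tau_1$ is replaced by $\spl(\tau_i,\mu_i;\tau_1,\nu)$: the consumed tree $\tau_i$ is irreducible with maximum $m_1=\mu_i$, while the other tree $\tau_1$ has maximum $m_2=\max V(\tau_1)<\mu_i=m_1$ and second splicing vertex $\nu<m_1$, so Lemma~\ref{lem:splice_reducible}(a) makes the new $\tau_1$ reducible, it is $\pi$-increasing by the preliminary fact, and now $\max V(\tau_1)=\mu_i<\mu_{i+1}$ while the reset value $\nu=c_i$ satisfies $c_i\le\mu_i<\mu_{i+1}$ and lies in the new $\tau_1$. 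The trees $\tau_{i+1},\ldots,\tau_k$ are untouched, so the invariant and all stated properties pass to Stage~$i+1$, with $\tau_1$ now the required $\pi$-increasing reducible tree built by an internal splice.

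In Case~2 some $\tau_j$ with $j>i$ is replaced by $\spl(\tau_i,\mu_i;\tau_j,c_i)$: the consumed tree $\tau_i$ is irreducible with maximum $m_1=\mu_i$, the tree $\tau_j$ is irreducible with maximum $m_2=\mu_j>m_1$, and $c_i<\mu_i=m_1$ (one cannot have $c_i=\mu_i$, since in Case~2 the vertex $c_i$ is not in $\tau_i$), so Lemma~\ref{lem:splice_reducible}(b) makes the new $\tau_j$ irreducible, and it is $\pi$-increasing with $\max V(\tau_j)=\max(\mu_i,\mu_j)=\mu_j$; as $\tau_1$ and $\nu$ are unchanged, the invariant and the stated properties again pass to Stage~$i+1$. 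Finally, at Stage~$k$ the tree $T=\spl(\tau_k,\mu_k;\tau_1,\nu)$ is formed: $\tau_k$ is irreducible with maximum $m_1=\mu_k$, while $\tau_1$ has $\max V(\tau_1)<\mu_k=m_1$ and $\nu<m_1$, so Lemma~\ref{lem:splice_reducible}(a) gives that $T$ is reducible, and it is $\pi$-increasing with vertex-set $\N_r$; this is the third assertion. (For $k=2$ this single splice, with $\tau_1=\{1\}$ and $\nu=1$, is the whole construction, and the argument is the same.)

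I expect the only delicate point to be the ordering clause of the invariant: that at every splice the consumed tree is the one with the larger maximum (in Case~1 and in the final stage) or the smaller maximum (in Case~2), and that the second splicing vertex lies strictly below $m_1$. This is exactly what is needed to invoke the correct part of Lemma~\ref{lem:splice_reducible}, and it reduces to checking that $\max V(\tau_1)<\mu_i$ is self-propagating (it becomes $\mu_i$ after a Case~1 splice, and $\mu_i<\mu_{i+1}$), that $\max V(\tau_j)=\mu_j$ survives the Case~2 splices (since these only add elements $\le\mu_i<\mu_j$), and that $\nu$ and $c_i$ are at most $\mu_i$. Once this invariant is pinned down, the rest of the induction is routine bookkeeping.
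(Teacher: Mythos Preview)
Your proposal is correct and follows essentially the same approach as the paper: induction on the stages, with Lemma~\ref{inheritancestor}(a) carrying the $\pi$-increasing property and Lemma~\ref{lem:splice_reducible} (part~(a) for internal splices, part~(b) for external splices) handling reducibility/irreducibility. The paper's own proof is extremely terse---it simply names these two lemmas---whereas you have explicitly spelled out the bookkeeping invariant (on $\max V(\tau_j)$, $\max V(\tau_1)$, and $\nu$) that is needed to verify the hypotheses of Lemma~\ref{lem:splice_reducible} at each step; this is exactly the content that the paper leaves implicit.
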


\begin{proof}
At Stage 1, each tree $\tau_i$ consists of vertices of the single block $\pi_i$ of $\pi$, arranged as an increasing chain, so each $\tau_i$ is $\pi$-increasing. Also, since $G_{\mu_i}(\tau_i)$ has only the single vertex $\pi_i$, it is a connected graph, so $\tau_i$ is irreducible.

This result for Stage 1 serves as the base case for an induction on the stages, in which the remaining results follow immediately from Lemma~\ref{inheritancestor}(a) (for $\pi$-increasing), and Lemma~\ref{lem:splice_reducible} (part~(b) for irreducible, and~(a) for reducible).
\end{proof}

In particular, Proposition~\ref{proposition_psiandred} establishes that \emph{all} trees $T$ created as images of our combinatorial mapping $\psi_{\pi}$ are reducible. Thus, in order for $\psi_{\pi}:\mcC(\pi)\rightarrow\mcE(\pi)$ to be a bijection for any $\pi\in\Pi_1(\N_r)$, when $r\geq2$, it is necessary that all trees $T$ in $\mcE(\pi)$ are reducible. We prove that this is indeed the case in the following result. 
(note that
$T$ belongs to $\mcE(\pi)$ implies in particular that $T$ is $\pi$-increasing).

\begin{proposition}\label{prop_Treducible}
    Consider a $\pi$-increasing tree $T$ whose vertex-set contains $1$.
    We assume that the vertex-set of $T$ is not reduced to $\{1\}$
    and denote its maximum element by $M$.

    Then $\pi_1$ and $\pi^M$ are in different connected components of
    $G_M(T)$.
    In particular,
    then $T$ is reducible. 
\end{proposition}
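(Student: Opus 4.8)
The goal is to show that, for a $\pi$-increasing tree $T$ containing the vertex $1$, with $|V(T)| \ge 2$ and maximum vertex $M$, the vertices $\pi_1 = \{1\}$ and $\pi^M$ lie in distinct connected components of $G_M(T)$. The plan is to argue by induction on the number of blocks of $\pi_{|V(T)}$, using the recovery/decomposition philosophy underlying the splice operation. The base case is when $V(T)$ consists of exactly two blocks, $\{1\}$ and $\pi^M$ (so $1$ is the root of $T$). Here $G_M(T)$ has vertex set $\{\pi_1, \pi^M\}$, and I would check directly that there is no edge between them: any edge out of $\pi_1$ would require a maximum element $\mu_i \ne M$, $\mu_i \in \{\pi_1,\pi^M\}$, not on the chain from the root to $M$; but $\mu_1 = 1$ is the root, hence is on every such chain, so $\pi_1$ has outdegree $0$, and the only remaining vertex $\pi^M$ also has outdegree $0$ since $M$ is the root of the last component of the $M$-decomposition. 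So the graph has no edges at all, and the two vertices are in different components.

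\textbf{Inductive step.} For the general case, let $m'$ be the second-largest maximum among the blocks of $\pi_{|V(T)}$, i.e.\ $m' = \mu_{|\pi_{|V(T)}|-1}$ in the local indexing, and let $B = \pi^{m'}$ be that block. The idea is to peel off $B$: remove the vertices of $B$ from $T$. Because $T$ is $\pi$-increasing and $m'$ is the maximum of $B$ but not the global maximum, $m'$ cannot be an ancestor of $M$ in $T$ — otherwise $M$ would be a descendant of $m'$ and in the same block by $\pi$-increasingness, contradiction since $m' < M$. Hence the block $B$ sits ``below'' the chain from the root to $M$ in a controlled way: specifically, if $v^*$ denotes the highest vertex of $B$ on $T$, its father $f := f_T(v^*)$ lies on — or is ``outside'' — the relevant chains, and the subtree structure lets me write $T$ as a splice $T = \spl(T'', \mu; T', f)$ for appropriate data, or at least relate $G_M(T)$ to $G_{M}(T')$ where $T'$ is $T$ with $B$ deleted. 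The cleaner route is probably to invoke Lemma~\ref{lem:splice_reducible}(a) directly: express $T$ as the splice of an irreducible piece carrying $B$ with the rest, so that $G_M(T) = G_{M''}(\text{rest}) \sqcup G(\text{$B$-piece})$, then apply the induction hypothesis to ``rest'', which is a smaller $\pi$-increasing tree still containing $1$ and with the same maximum $M$. Since $\pi_1$ and $\pi^M$ both live in ``rest'' and are in different components of $G_M(\text{rest})$ by induction, they remain in different components of the disjoint union $G_M(T)$.

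\textbf{Main obstacle.} The delicate point is the decomposition step: one must argue that $T$ genuinely arises as a splice $\spl(T_1, m_1; T_2, v_2)$ with $m_1 < m_2 = M$ (so that part~(a) of Lemma~\ref{lem:splice_reducible} applies and yields a disjoint union) where $T_1$ is exactly the irreducible tree on $B$ — or more generally on a union of blocks — and $T_2$ is the $\pi$-increasing tree on the remaining blocks. Establishing that this decomposition exists and is unique requires carefully tracking where $B$ attaches to the rest of $T$, using $\pi$-increasingness to control the maximal chains, and checking the inequalities $b_1 < \cdots < b_n = v_2 < m_1 < M$ that the splice definition demands. A subtlety is ensuring the ``$B$-piece'' is irreducible (needed for Lemma~\ref{lem:splice_reducible} to apply) — for a single block this is the Stage~1 observation from Proposition~\ref{proposition_psiandred}, but if one peels off several blocks at once one needs Lemma~\ref{lem_conntree} or an auxiliary induction. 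I expect the bookkeeping around which vertex of $T$ serves as the second splicing vertex, and verifying it is $< m_1$, to be the part demanding the most care; once the splice decomposition is in hand, the rest is a short appeal to Lemma~\ref{lem:splice_reducible}(a) and the induction hypothesis.
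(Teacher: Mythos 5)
Your base case already contains the whole proof, and the paper's argument is exactly that base case run in full generality: since $T$ is increasing and contains $1$, the vertex $1$ is the root of $T$, hence $\mu_1=1$ is the root of the first component of the $M$-decomposition and can never occur as a nonroot vertex of any component; so $\pi_1$ has outdegree $0$ in $G_M(T)$. The block $\pi^M$ also has outdegree $0$, and every other vertex has outdegree at most $1$. A connected component of $G_M(T)$ containing the two distinct vertices $\pi_1$ and $\pi^M$ would therefore have $n\ge 2$ vertices and at most $n-2$ edges, which is impossible. Nothing in this argument uses the number of blocks, so the inductive superstructure you build on top of it is unnecessary.

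More importantly, the inductive step as you outline it has a genuine gap. You propose to peel off an irreducible piece carrying the block $B=\pi^{m'}$ (with $m'<M$) and to invoke Lemma~\ref{lem:splice_reducible}(a) to get $G_M(T)$ as a disjoint union. But part~(a) of that lemma applies only when the irreducible piece $T_1$ contains the \emph{global} maximum $m_1$ of $T$ and is spliced at $m_1$; it then yields $G_{m_1}(T)=G_{m_1}(T_1)\sqcup G_{v_2}(T_2)$, where the second summand is the $v_2$-dependence graph of the rest, not its own maximum-vertex dependence graph. In your configuration the peeled piece does \emph{not} contain $M$, which is precisely the situation of part~(b), where the dependence graph of the splice is \emph{not} a disjoint union: edges of the smaller piece get rerouted and a new edge to $\pi^{b_u}$ is created. (One can check from the edge description in the proof of part~(b) that connectivity among the blocks of the ``rest'' is nevertheless preserved, so the induction could be salvaged, but that requires the full rerouting analysis, not the disjoint-union statement you cite.) A second unresolved point is the existence of the splice decomposition itself: by Lemma~\ref{Lem_splcondvtx}, peeling off $B$ at $m'$ requires $\{B\}$ to be a union of connected components of $G_{m'}(T)$, which fails whenever the maximum of some other block sits as a nonroot vertex in a subtree hanging off the chain from the root to $m'$ at a vertex of $B$; your hedge about ``peeling off several blocks at once'' does not settle which union of blocks to take or why that union is irreducible.
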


\begin{proof}
Recall that the partition $\pi$ contains $\pi_1=\{ 1\}$ as a block, with maximum element $\mu_1=1$.
Now consider a $\pi$-increasing tree $T$ containing $1$
and its $M$-decomposition $T^{(a_1)},\ldots ,T^{(a_{\ell})}$,
with $a_{\ell}=M$ (where $M$ is the maximum vertex of $T$).
But vertex $1$ is the root vertex of every tree $T$ in $\mcE(\pi)$, so $a_1=1$.
Thus $\mu_1$ is the root vertex of $T^{(a_1)}$, and cannot appear as a nonroot vertex in any tree of the $r$-decomposition of $T$.

This implies that $\pi_1$ has outdegree $0$ in the $v$-dependence graph $G_v(T)$.
But $\pi^M$ has also outdegree $0$ (see the proof of Lemma~\ref{lem_conntree}),
and other vertices (if any) have outdegree at most $1$.
Therefore, $\pi_1$ and $\pi^M$ are in different connected components of          
    $G_M(T)$.
\end{proof}

\subsection{Irreducibility and inverting the combinatorial mapping}
In this section we prove that each application of \emph{splice} in our combinatorial mapping $\psi_{\pi}$ can be uniquely reversed by considering only the irreducibility or reducibility of the trees involved.

We begin with a simple condition for when a $\pi$-increasing tree can be written as the splice of two subtrees.

\begin{lemma}\label{Lem_splcondvtx}
Consider a $\pi$-increasing tree $T$, a $\pi$-compatible nonempty subset $V_1\subset V(T)$, and a vertex $v_1\in V_1$.  Then $T$ can be written as
\[T=\spl \big( T_1,v_1;T_2,v_2 \big) \]
for some $\pi$-increasing trees $T_1$ and $T_2$ with vertex-sets $V_1$ and $V_2 = V(T) \setminus V_1$
and for some vertex $v_2\in V_2$ if and only if
\[ V_1 = \bigcup_{i \in I} \pi_i,\]
where $\{\pi_i, i \in I\}$ is a union of vertex-sets of connected components of $G_{v_1}(T)$.

In this case, $T_1$, $T_2$ and $v_2$ are unique.
\end{lemma}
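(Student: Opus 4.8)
The plan is to prove both directions separately, using the structure of the $v_1$-decomposition and Definition~\ref{Def_Splice} as the bridge. For the ``only if'' direction, suppose $T = \spl(T_1,v_1;T_2,v_2)$ with $V(T_1)=V_1$. By Definition~\ref{Def_Splice}, the $v_1$-decomposition of $T$ is the interleaving~\eqref{spldecomp} of the $v_1$-decomposition of $T_1$ and the $v_2$-decomposition of $T_2$. Now consider any maximum element $\mu_i$ not on the chain from the root to $v_1$ in $T$: it lies as a nonroot vertex in some component $T^{(a)}$ of the $v_1$-decomposition, and that component is either a component of the $v_1$-decomposition of $T_1$ or of the $v_2$-decomposition of $T_2$. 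Crucially, since the vertex-sets of $T_1$ and $T_2$ are $\pi$-compatible unions of blocks, $\mu_i$ and the vertex $a$ (equivalently $\pi^{\mu_i}$ and $\pi^{a}$) lie on the same side of the partition $V_1 \sqcup V_2$; hence every edge of $G_{v_1}(T)$ joins two vertices on the same side. This shows the vertex-set of each connected component of $G_{v_1}(T)$ lies entirely within $V_1$ or entirely within $V_2$, so $V_1$ is a union of such component vertex-sets, as claimed.

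For the ``if'' direction, suppose $V_1 = \bigcup_{i\in I}\pi_i$ where $\{\pi_i : i\in I\}$ is a union of vertex-sets of connected components of $G_{v_1}(T)$. I want to construct $T_1$, $T_2$ and $v_2$. The natural definition is: $T_1$ is the $\pi$-increasing tree on $V_1$ obtained by taking the $v_1$-decomposition of $T$, discarding every component whose root is \emph{not} in $V_1$, and reassembling the remaining components into a tree with $v_1$-decomposition in the same relative order (equivalently, take the subforest of $T$ induced on $V_1$; one checks it is connected and $\pi$-increasing). Similarly $T_2$ is the induced subtree on $V_2$; here the key point is that the \emph{maximum} element $v_2$ of $V_2$ must be identified — and since $v_2$ is the maximum of a union of blocks, $v_2=\mu_j$ for some $j$, and one shows $v_2$ is on the chain from the root of $T$ to $v_1$ precisely by the component-condition on $V_1$ (if it weren't, the edge out of $\pi^{v_2}=\pi_j$ would cross from $V_2$ into $V_1$, contradicting that $V_1$ is a union of components). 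Then one verifies directly from Definition~\ref{Def_Splice} that $\spl(T_1,v_1;T_2,v_2)$ reconstructs $T$: the interleaving~\eqref{spldecomp} is forced because the roots $a_i$ and $b_j$ appearing in the $v_1$-decomposition of $T$ are exactly the roots of the two sub-decompositions, and their ordering by label is inherited from $T$.

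Uniqueness follows from Lemma~\ref{inheritancestor}(a): given $V(T_1)=V_1$ and $V(T_2)=V_2$, one recovers $T_1$ and $T_2$ from $R=T$, so $T_1$ and $T_2$ are determined; and $v_1$ is given, while $v_2$ is forced to be the maximum vertex of $V_2$ (since $v_2 = b_m$ in the notation of Definition~\ref{Def_Splice}, and $b_m$ is the maximum of $V(T_2)$ as $T_2$ is increasing). The main obstacle I anticipate is the ``if'' direction: one must carefully check that the subforest of $T$ induced on $V_1$ is genuinely an unordered increasing tree (connectedness is the delicate point) and that the splice of the two reconstructed pieces really returns $T$ and not some other interleaving — this is where the component-condition on $G_{v_1}(T)$ does the essential work, ensuring that no block-maximum ``wants'' to sit on the wrong side, so that the greedy interleaving dictated by~\eqref{spldecomp} matches the actual $v_1$-decomposition of $T$. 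Everything else is bookkeeping with $v$-decompositions and the definition of the dependence graph.
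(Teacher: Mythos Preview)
Your ``only if'' direction is fine and is essentially the same argument as the paper's, phrased slightly differently. But there is a genuine error in your ``if'' direction and in the uniqueness claim: you identify $v_2$ as the \emph{maximum} vertex of $V_2$, and this is incorrect. In Definition~\ref{Def_Splice}, the splicing vertex $v_2=b_m$ is the last element on the chain from the root of $T_2$ to $v_2$; it is not the maximum label in $V(T_2)$. Concretely, take the final tree $T$ in Figure~\ref{egpsi}, where $T=\spl(\tau_4,9;\tau_1,4)$ with $V_2=V(\tau_1)=\{1,3,4,6\}$. Here $v_2=4$, but $\max V_2=6$, and $6$ is \emph{not} on the chain from the root to $v_1=9$ in $T$. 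So your argument that ``$v_2=\mu_j$ lies on the chain from the root to $v_1$'' fails, and the whole construction of $T_2$ via its $v_2$-decomposition collapses. The correct description, which the paper uses, is that $v_2$ is the first element of $V_2$ encountered on the chain from $v_1$ towards the root of $T$ (equivalently, the largest ancestor of $v_1$ in $T$ that lies in $V_2$).

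A related slip: your parenthetical ``equivalently, take the subforest of $T$ induced on $V_1$; one checks it is connected'' is also false. In the same example, the subgraph of $T$ induced on $V_1=\{2,5,7,8,9\}$ leaves vertex $2$ isolated. The correct construction of $T_1$ is the one you stated first --- reassemble the components $T^{(a_j)}$ with $a_j\in V_1$ in order --- and $T_2$ must be built the same way from the remaining components, with the chain running only up to the vertex $v_2$ identified as above. Once $v_2$ is fixed correctly, the rest of your outline (checking that the interleaving~\eqref{spldecomp} reproduces the $v_1$-decomposition of $T$, and invoking Lemma~\ref{inheritancestor}(a) for uniqueness of $T_1,T_2$) goes through.
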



\begin{proof}
Given $T$, $v_1$ and $V(T_1)$, we immediately have $V_2 = V(T) \setminus V_1$. Then it is clear from Definition~\ref{Def_Splice} that $v_2$ is uniquely the first element of $V_2$ on the chain from $v_1$ to the root vertex of $T$ (note that $v_2<v_1$ since $T$ is an increasing tree). Moreover, the trees $T_1$ and $T_2$ themselves are then uniquely determined, since we know their $v_1$-decomposition and $v_2$-decomposition, respectively.

It only remains to determine conditions for $V_1$. Let the $v_1$-decomposition of $T$ be given by $T^{(a_1)},\ldots ,T^{(a_{\ell})}$, where $a_{\ell}=v_1$. Then from Definition~\ref{Def_Splice}, a necessary and sufficient condition  is that $V_1$ (and $V_2$) are unions of the $V(T^{(a_j)})$, $j=1,\ldots ,\ell$.
Since, by hypothesis, $V_1$ is a union of blocks of $\pi$, this is equivalent
to saying that $\pi$ is a union of blocks of the partition
\[ \Pi_1:= \pi \vee \left( \big\{ V(T^{(a_j)}), j=1,\ldots ,\ell \big\} \right). \]
It remains to see that this partition is nothing other than
\[ \Pi_2:=\{X_c, \ c\text{ connected component of }G_{v_1}(T)\},\text{ where }X_c=\bigcup_{\pi_i \in V_c} \pi_i.\]
To do this, take two partitions $\pi^x$ and $\pi^y$ which contain elements $x$ and $y$ in the same
set $V(T^{(a_j)})$. 
We want to prove that $\pi^x$ and $\pi^y$ are in the same connected components of $G_{v_1}(T)$.

Call $u$ the root of $T^{(a_j)}$
At least one of these elements, say $x$, is different from $u$.
Then by definition, there is an edge from $\pi^x$ to $\pi^u$ in $G_{v_1}(T)$. 
If $y=u$, there is an edge from $\pi^x$ to $\pi^y$, and thus they are in the same connected component of $G_{v_1}(T)$.
If $y \neq u$, the same argument as above implies that there is also an edge 
from $\pi^y$ to $\pi^u$, and one can also conclude that $\pi^x$ and $\pi^y$
lie in the same connected component of $G_{v_1}(T)$.
Hence $\Pi_1$ is finer than $\Pi_2$.

Conversely, suppose that there is an edge from $\pi_s$ to $\pi_t$ in $G_{v_1}(T)$.
Then this means that $\mu_s$ is a nonroot vertex in some $T^{(a_j)}$, 
with $a_j\in\pi_t$, which implies that there are elements of both $\pi_s$ and $\pi_t$ 
in the same subtree $T^{(a_j)}$.
Hence, $\Pi_2$ is finer than $\Pi_1$.

We conclude that $\Pi_1=\Pi_2$, which ends the proof of the Lemma.
\end{proof}

In the next result, which is the key to inverting $\psi_{\pi}$, we consider a $\pi$-increasing tree in which the vertex-set consists of two or more blocks of $\pi$. For such a tree with vertex-set $\mcS$ and maximum vertex $M$, we call $m=\max(\mcS \backslash \pi^M)$ the {\em second maximum} vertex.

\begin{lemma}\label{lemsplirred}
Suppose that $T$ is a $\pi$-increasing tree in which the vertex-set consists of two or more blocks of $\pi$, and let $M$ and $m$ be the maximum and second maximum vertices, respectively.
\begin{itemize}
\item[\emph{(a)}]
If $T$ is reducible, then it can be written uniquely as
$$T=\spl(T_1,M;T_2,t),$$
where $T_1$ and $T_2$ are $\pi$-increasing trees subject to:
\begin{itemize}
     \item[\textbullet]
$M$ is a vertex in $T_1$, $t$ is a  vertex in $T_2$,
     \item[\textbullet]
 $T_1$ is irreducible.
  \end{itemize}
 Moreover, if $1$ is a vertex of $T$,
  then it automatically belongs to $T_2$.
\item[\emph{(b)}]
If $T$ is  irreducible, then it can be written uniquely as
$$T=\spl(T_1,m;T_2,t),$$
where $T_1$ and $T_2$ are $\pi$-increasing trees subject to:
 \begin{itemize}
     \item[\textbullet]
$m$ is a vertex in $T_1$, $t$ and $M$ are vertices in $T_2$, with $t<m$,
     \item[\textbullet]
 $T_1$ and $T_2$ are irreducible.
  \end{itemize}
\end{itemize}
\end{lemma}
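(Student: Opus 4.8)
\textbf{Proof plan for Lemma~\ref{lemsplirred}.}

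The plan is to use Lemma~\ref{Lem_splcondvtx} as the engine for both parts: that lemma tells us that a decomposition $T = \spl(T_1, v_1; T_2, v_2)$ with $v_1$ a prescribed vertex and $V(T_1)$ a prescribed $\pi$-compatible set exists and is unique exactly when $V(T_1)$ is a union of vertex-sets of connected components of $G_{v_1}(T)$. So in each part the task reduces to identifying the right splicing vertex $v_1$ and the right choice of connected components, then checking the irreducibility side-conditions via Lemmas~\ref{lem_conntree} and~\ref{lem:splice_reducible}.

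For part~(a), since $T$ is reducible, $G_M(T)$ is disconnected. The vertex $\pi^M$ has outdegree $0$ (as in Lemma~\ref{lem_conntree}), and in fact every connected component of $G_M(T)$ other than the one containing $\pi^M$ is, by the outdegree count in the proof of Lemma~\ref{lem_conntree}, a tree directed towards a sink of outdegree $0$; since the only vertex of outdegree $0$ reachable-wise can be forced, I would argue that each such component is itself a directed tree whose root is its unique outdegree-$0$ vertex, hence corresponds to an irreducible subtree. Concretely: let $C$ be the connected component of $G_M(T)$ containing $\pi^M$, set $V(T_1) = \bigcup_{\pi_i \in C}\pi_i$, and let $v_1 = M$. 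Then $V(T_1)$ is a union of components of $G_M(T)$ (trivially, it is one component), so Lemma~\ref{Lem_splcondvtx} produces unique $T_1, T_2, t$ with $T = \spl(T_1, M; T_2, t)$. That $T_1$ is irreducible follows because $G_M(T_1)$ is exactly the induced subgraph of $G_M(T)$ on $C$ — this needs the observation that the $M$-decomposition of $T_1$ is recovered from that of $T$ by Lemma~\ref{inheritancestor}(a) and Definition~\ref{Def_Splice}, combined with Lemma~\ref{lem:splice_reducible}(a) read backwards. For uniqueness: any valid decomposition $T = \spl(T_1',M;T_2',t')$ with $T_1'$ irreducible must, by Lemma~\ref{lem:splice_reducible}(a) (note $M$ is the max of $T$, and the max $m_1$ of $T_1'$ equals $M$ since $M\in V(T_1')$, so $m_1 > m_2$), satisfy $G_M(T) = G_M(T_1') \sqcup G_{t'}(T_2')$; since $T_1'$ is irreducible, $G_M(T_1')$ is connected, so $V(T_1')$ must be a single connected component of $G_M(T)$, and it must be the one containing $\pi^M$ because $M \in V(T_1')$. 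Finally, if $1 \in V(T)$, then $\pi_1 = \{1\}$ has outdegree $0$ in $G_M(T)$ and, by Proposition~\ref{prop_Treducible}, lies in a different component from $\pi^M$, hence $1 \notin V(T_1)$, i.e. $1 \in T_2$.

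For part~(b), $T$ is irreducible, so by Lemma~\ref{lem_conntree}, $G_M(T)$ is a tree rooted at $\pi^M$ with all edges directed toward the root. The idea is to peel off the block $\pi^m$, where $m$ is the second maximum: I would take $v_1 = m$ and $V(T_1) = \pi^m$ (a single block), and need to check this is a union of components of $G_m(T)$. The key point is that $\pi^m$ should be a \emph{leaf} component in the appropriate sense — intuitively $m$ being the global second-max means $\mu$ for its block sits at the "top" of everything except $\pi^M$, forcing $\pi^m$ to be isolated in $G_m(T)$; establishing this precisely is the main obstacle and will require a careful look at how $G_m(T)$ relates to $G_M(T)$ (the maximum element of every block other than $\pi^M$ and $\pi^m$ is $< m$, so such $\mu_i$ never lies on the chain to $m$ and the edges of $G_m(T)$ out of those $\pi_i$ are governed by which subtree of the $m$-decomposition they fall in). Granting that $\pi^m$ is a union of components of $G_m(T)$, Lemma~\ref{Lem_splcondvtx} gives unique $T_1, T_2, t$ with $V(T_1) = \pi^m$, $V(T_2) = V(T)\setminus\pi^m \ni M$, and $t < m$ since $T$ is increasing. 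Then $T_1$, having a single block as vertex-set, is automatically irreducible, and $m_1 = m < M = m_2$; to get that $T_2$ is irreducible I would invoke Lemma~\ref{lem:splice_reducible}(b) in the contrapositive form — if $T_2$ were reducible, splicing it with the irreducible $T_1$ (with the smaller max on $T_1$) would, by (the reducible-input analogue of) Lemma~\ref{lem:splice_reducible}, keep $T$ reducible, contradicting irreducibility of $T$; alternatively one shows directly that $G_M(T_2)$ is obtained from $G_M(T)$ by deleting the leaf $\pi^m$ and re-routing its unique out-edge, which preserves connectedness. For uniqueness of the decomposition in~(b), suppose $T = \spl(T_1', v_1'; T_2', t')$ with both factors irreducible, $v_1'$ the max of $T_1'$, and $M \in V(T_2')$; then $m_1' = \max V(T_1') < M = \max V(T_2') = m_2'$, so Lemma~\ref{lem:splice_reducible}(b) applies and in its proof $G_{m_2'}(T) = G_M(T)$ is described as $G_{m_1'}(T_1') \sqcup G_M(T_2')$ with one re-routed edge and one added edge; since $G_M(T)$ is a tree and $G_{m_1'}(T_1')$ is connected, $V(T_1')$ is forced to be the (vertex-set union of the) set of blocks forming a subtree hanging below the added edge, and tracking which vertex plays the role of "$b_u$" pins down that $\max V(T_1')$ must be the second maximum $m$ and $V(T_1') = \pi^m$. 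I expect the bookkeeping in this last uniqueness argument, and the claim that $\pi^m$ is isolated in $G_m(T)$, to be where essentially all the work lies; the existence halves are quick once Lemma~\ref{Lem_splcondvtx} is in hand.
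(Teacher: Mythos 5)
Your part (a) follows the paper's argument: take $V(T_1)$ to be the connected component of $G_M(T)$ containing $\pi^M$, get existence and uniqueness from Lemma~\ref{Lem_splcondvtx} combined with Lemma~\ref{lem:splice_reducible}(a), and deduce $1\in T_2$ from Proposition~\ref{prop_Treducible}. (Your aside that every other component of $G_M(T)$ is a tree directed towards an outdegree-$0$ sink is false --- a component can consist of blocks feeding into a self-loop, as $\pi_2$ does in Figure~\ref{fig:ex_dep} --- but you do not actually use it.)

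Part (b), however, rests on a false claim. You set $V(T_1)=\pi^m$ and argue that $\pi^m$ is isolated in $G_m(T)$, so that $T_1$ is a single-block, automatically irreducible tree. This is wrong: blocks $\pi_i$ with $\mu_i<m$ can have an edge into $\pi^m$ in $G_m(T)$, and then by Lemma~\ref{Lem_splcondvtx} no splice decomposition with $V(T_1)=\pi^m$ exists at all, since $\pi^m$ is not a union of components. The paper's own running example is a counterexample: for the irreducible tree $T$ of Figure~\ref{fig:ex_splice_irreducible}, with $M=11$ and $m=10$, the graph $G_{10}(T)$ (Figure~\ref{fig:10-dep-graph}) has edges from $\pi_3$ and $\pi_4$ into $\pi_5=\pi^m$, and the correct $T_1$ has vertex-set $\pi_3\cup\pi_4\cup\pi_5=\{2,4,7,8,9,10\}$, which is three blocks. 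The correct choice is $V(T_1)=$ the whole connected component of $G_m(T)$ containing $\pi^m$, and the real work is to show that this component is proper, i.e. that $G_m(T)$ is disconnected with $\pi^M$ in a different component. The paper does this by locating the vertex $u$ where the root-to-$M$ and root-to-$m$ chains branch apart and showing that $\pi^u$ carries a self-loop in $G_m(T)$ while $\pi^m$ has outdegree $0$, which forces $\pi^m$ and $\pi^u$ (hence $\pi^M$) into different components. Your argument for irreducibility of $T_2$ (``delete the leaf $\pi^m$ and re-route its out-edge'') and your uniqueness bookkeeping both depend on the same false premise; moreover, the ``reducible-input analogue'' of Lemma~\ref{lem:splice_reducible}(b) that you invoke is not a statement proved in the paper, and the contrapositive of (b) does not yield it. The paper instead proves irreducibility of $T_2$ directly, by describing how the $M$-decomposition and $M$-dependence graph of $T_2$ are obtained from those of $T$. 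Part (b) therefore needs to be redone starting from the correct identification of $V(T_1)$.
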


\begin{proof}
(a) From Lemma~\ref{Lem_splcondvtx}, the vertex-set $V_1$ of $T_1$ must correspond to a union of
connected components of $G_M(T)$.
In addition, from Lemma~\ref{lem:splice_reducible} (a),
$G_M(T_1)$ is the graph induced by $G_M(T)$ on $V_1$.
Hence, if we want $T_1$ to be irreducible, that is $G_M(T_1)$ to be connected,
then $V_1$ must correspond to a single connected component of $G_M(T)$.
Moreover, since we require $M$ to be in $T_1$, it must contain the block $\pi^M$.

Finally, $V_1$ is uniquely the vertex-set of the connected component of $G_M(T)$ containing $\pi^M$ (note that $t<M$ for all vertices $t$ in $T_2$, since $M$ is the maximum vertex in $T$). 
The result follows immediately from Lemma~\ref{Lem_splcondvtx}.

The property that, if $1$ is in $T$, then it is always in $T_2$ comes
from the fact that $\pi_1$ and $\pi^M$ are in different connected components of $G_M(T)$ 
(Proposition~\ref{prop_Treducible}) and the characterization of $V_1$ above.

(b) Consider the $M-$ and $m$-decompositions of $T$:
    \[T^{(a_1)}_M,\cdots,T^{(a_n)}_M \quad \text{and} \quad T^{(b_1)}_m,\cdots,T^{(b_\ell)}_m,\]
    in which $a_1=b_1$ is the root vertex of $T$, and $a_n=M$, $b_{\ell}=m$. Now $M>m$, so $m$ is not contained in the chain $b_1< \cdots < b_{\ell}$. Also, $T$ is irreducible, so Lemma~\ref{lem_conntree} with $v=M$ implies that $\pi^m$ has outdegree $1$ in $G_M(T)$.  But $m$ is the maximum element in $\pi^m$, so $m$ is a nonroot vertex in one of the trees in the $M$-decomposition of $T$, and hence $M$ is not contained in the chain $a_1<\cdots <a_n$. 
Thus  $j\ge 1$ exists so that $a_1=b_1,\ldots ,a_j=b_j$ and $a_{j+1}\neq b_{j+1}$, and $j<n$, $j<\ell$. Let $u=a_j=b_j$. The vertex $m$ lies in the subtree $T^{(u)}_M$, and before $T^{(u)}_M$, the $M$ and $m$-decompositions of $T$ coincide; see Figure~\ref{fig:Mm_dec} for the general picture
and Figures~\ref{fig:11-dec} and \ref{fig:10-dec}
 for a concrete example, in which $M=11$, and $m=10$.

    \begin{figure}
          \centering 
          \def\svgwidth{200pt}
          \ifpdf
                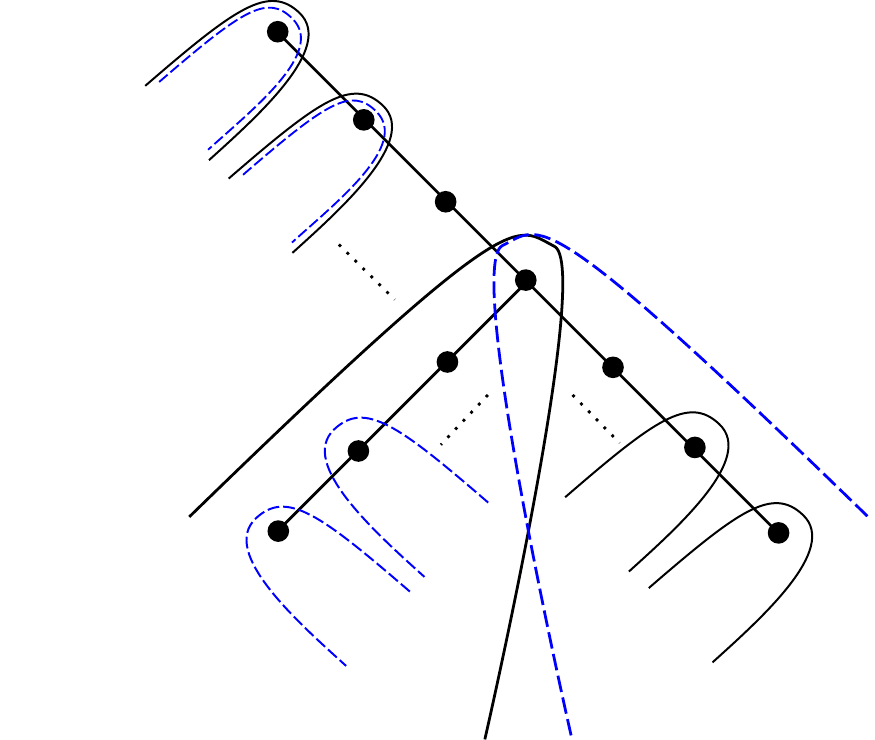
          \else
                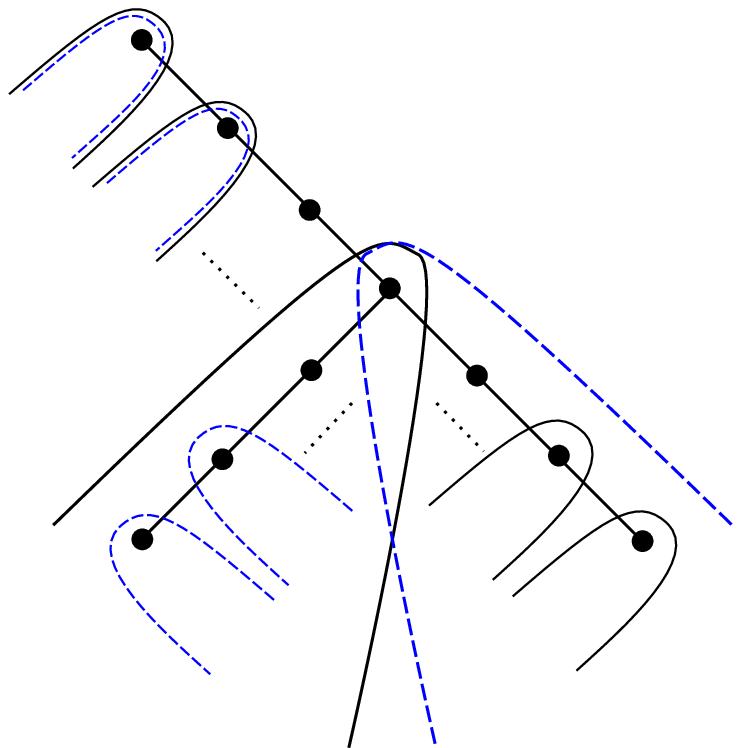
          \fi
          \caption{$M$- and $m$-decompositions of a tree $T$ ($M$-decomposition in black plain lines
          and $m$-decomposition in blue dashed lines).}
          \label{fig:Mm_dec}
    \end{figure}

    \begin{figure}[t]
        \[ \begin{array}{c|c}\begin{tikzpicture}
        [font=\scriptsize,level 1/.style={sibling distance=10mm}]
            \node [part3] {$2$};
            \node at (1,0) [root] {$3$};
            \node at (2,0) [part1] {$4$}
                 child{node[part3] {$7$}};
            \node at (3.5,0) [part4] {$5$}
                 child{node[root] {$6$}}
                 child{node[part4] {$11$}};
            \node[part1] at (5,0) {$8$}
                 child{node[part2] {$9$}};
            \node[part1] at (6,0) {$10$};
    \end{tikzpicture}&
        \begin{tikzpicture}
            [font=\scriptsize]
    \node[part3] (pr) at (0,0) {$\pi_3$ }; 
    \node[part1] (p1) at (0.75,.75) {$\pi_5$ };
    \node[part2] (p2) at (1.5,0) {$\pi_4$ };
    \node[root] (p3) at (3.5,0) {$\pi_2$ }; 
    \node[part4] (p4) at (2.75,.75) {$\pi_6$ };
    \draw[dashed,->]  (p3) -- (p4);
    \draw[dashed,->]  (p2) -- (p1);
    \draw[dashed,->]  (pr) -- (p1);
    \draw[dashed,->]  (p4) {} .. controls +(-1,1) and +(1,1) .. (p4);
        \end{tikzpicture}
    \end{array}
    \]
        \caption{The $10$-decomposition and $10$-dependence graph of the tree $T$ from 
        Figure~\ref{fig:ex_splice_irreducible}.}
        \label{fig:10-dec}
        \label{fig:10-dep-graph}
    \end{figure}
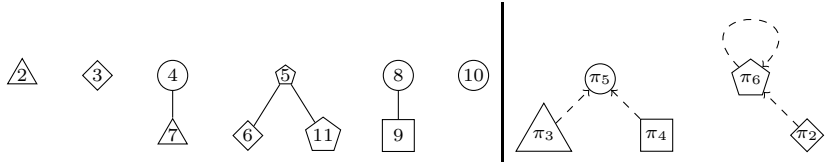

We now describe partially the component graph $G_m(T)$. 
To help the reader, an example is given in Figure \ref{fig:10-dep-graph}.

The vertex $M$ lies in the subtree $T^{(u)}_m$, so there is an edge from $\pi^M$ to $\pi^u$ in $G_m(T)$. Now $u\neq M$, and $u$ cannot be the maximum element of any other block of $\pi$, since this would imply that $\pi^u$ has outdegree $0$ in $G_M(T)$, which would contradict Lemma~\ref{lem_conntree}. 

Now consider the maximum element in $\pi^u$, that we will denote by $\mu^u$. Then $\mu^u\neq u$ is a descendant of $u$ in $T$, and cannot be contained in $T^{(u)}_M$, since that would create a loop in $G_M(T)$, again contradicting Lemma~\ref{lem_conntree}. Thus $\mu^u$ is contained in $T^{(u)}_m$ (as a nonroot vertex), which implies that there is a loop at $\pi^u$ in $G_m(T)$.

But clearly $\pi^m$ has outdegree $0$ in $G_m(T)$. Putting this together with the facts that there is a loop at $\pi^u$ in $G_m(T)$, and that each vertex has outdegree at most $1$, we see that $\pi^u$ and $\pi^m$ are contained in different components of $G_m(T)$, so $G_m(T)$ is not connected. Moreover, the edge from $\pi^M$ to $\pi^u$ in $G_m(T)$ implies that $\pi^M$ and $\pi^u$ are in the same component of $G_m(T)$ (note that we can have $\pi^u=\pi^M$).
    
Then Lemma~\ref{Lem_splcondvtx} implies that $T$ can be written uniquely as
\[T=\spl(T_1,m;T_2,t),\]
where $T_1$ and $T_2$ are $\pi$-increasing trees, $m$ is in $T_1$, $t$ and $M$ are in $T_2$, with $t<m$, and $T_1$ is irreducible: this is obtained by letting $V(T_1)$ be the vertex-set of the connected component of $G_m(T)$ that contains $\pi^m$, which means that $G_m(T_1)$ is connected. But the elements of $\pi^M$ are contained in $V(T_2)$, so $m$ is the maximum vertex in $T_1$, and so $T_1$ is irreducible.

It remains to prove that $T_2$ is also irreducible. To do this, we look at the $M$-decomposition of $T_2$ (since $M$ is the maximum vertex in $T_2$), which is obtained from the $M$-decomposition of $T$ as follows
(see the proof of Lemma \ref{lem:splice_reducible}):
    \begin{itemize}
        \item delete the blocks $T^{(a_i)}_M$, for which $a_i$ belongs
            to $T_1$.
            Since $T^{(u)}_m$ is in $T_2$, this can happen only for $i <j$,
            that is for blocks before $T^{(u)}_M$ in the decomposition;
        \item replace the block $T^{(u)}_M$ by some subblock  $(T_2)^{(u)}_M$
            still rooted at $u$.
    \end{itemize}
    In particular, if a block $X$ of $\pi$ is in $V(T_1)$ and
    if there is an edge from $Y$ to $X$ in $\G{M}{T}$,
    then $Y$ is also in $T_1$.
    This means that, if a vertex $X$ is deleted when going from $\G{M}{T}$
    to $\G{M}{T_2}$, all vertices pointed to it are also deleted,
    and recursively.
    For vertices that are not deleted their outgoing edge is not modified.

    Hence, since $\G{M}{T}$ is a directed tree (by Lemma \ref{lem_conntree}),
    $\G{M}{T_2}$ is also a directed tree, which implies that $T_2$
    is irreducible and ends the proof of the lemma.
\end{proof}

\begin{example}
    As (a) is quite easy, we only give here an example of (b).
    Consider the graph $T$ from Figure \ref{fig:ex_splice_irreducible}.
    Since it is irreducible, it can be written uniquely as 
    $$T=\spl(T_1,m;T_2,t),$$
    with the conditions given in Lemma \ref{lemsplirred} (b).
    This decomposition is the one from Figure \ref{fig:ex_splice_irreducible}.
    Note that the parts $\pi_2$ and $\pi_6$, which are the ones included in the vertex-set of $T_2$,
    correspond to the vertices in the connected component of $\pi_6$ in $G_{10}(T)$
    (see Figure \ref{fig:10-dep-graph}), as explained in our proof.
\end{example}

We now record a final straightforward fact about $\pi$-increasing trees.

\begin{proposition}\label{oneblock}
Suppose that $T$ is a $\pi$-increasing tree in which the vertex-set consists of a single block of $\pi$. Then $T$  is uniquely the increasing chain consisting of the elements of that block.
\end{proposition}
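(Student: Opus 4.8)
The plan is to unwind the definitions and observe that the hypothesis ``$T$ is $\pi$-increasing with vertex-set equal to a single block'' forces the ancestor relation on $V(T)$ to be a total order, and that a tree whose ancestor relation is a total order must be a chain.

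First I would fix notation: let the block in question be $\pi_i=\{s_1<s_2<\cdots<s_p\}$, so that $\mcS=V(T)=\pi_i$ and the restricted partition is the one-block partition $\pi_{|\mcS}=\{\pi_i\}$. By definition of $\mcE(\pi_{|\mcS})$, every block of $\pi_{|\mcS}$ is a subchain of $T$; since there is only the one block $\pi_i$, this says precisely that for every pair $a<b$ in $\pi_i$ the vertex $a$ is an ancestor of $b$ in $T$. In particular the ancestor relation on $\{s_1,\ldots,s_p\}$ is the natural linear order.

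The key step is then to look at the unique path in $T$ from its root to the maximum vertex $s_p$. For each $j<p$ we have $s_j<s_p$, so $s_j$ is an ancestor of $s_p$ and therefore lies on this path; hence all $p$ vertices of $T$ lie on a single path, which can contain at most $p$ vertices, so $T$ is exactly this path. Because $T$ is an increasing tree, labels increase away from the root along any branch, so the path reads $s_1,s_2,\ldots,s_p$ in that order, i.e.\ $T$ is the increasing chain on $\pi_i$. Uniqueness is immediate, since such a chain is completely determined by its vertex-set.

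I do not expect a genuine obstacle here; the only points requiring a little care are (i) correctly invoking the definition of $\mcE(\pi_{|\mcS})$ for the \emph{one-block} partition, so that the subchain condition collapses to totality of the ancestor order, and (ii) using the ``increasing'' hypothesis to pin down the orientation of the path, so that we obtain \emph{the} increasing chain rather than merely \emph{some} path on $\pi_i$.
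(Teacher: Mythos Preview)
Your argument is correct. The paper does not actually give a proof of this proposition; it simply introduces it as ``a final straightforward fact'' and states it without justification, so there is nothing to compare against beyond noting that your unwinding of the definition of $\pi$-increasing (i.e.\ of $\mcE(\pi_{|\mcS})$ for the one-block partition) is exactly the intended reasoning.
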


Now we are ready to prove Theorem~\ref{thm_bijn}.
\vspace{.1in}

\noindent
\textit{Proof of Theorem~\ref{thm_bijn}.} Suppose $r\ge 2$ and $\pi\in\Pi_1(\N_r)$, where $\pi$ has $k\ge 2$ blocks, and consider an arbitrary tree $T\in\mcE(\pi)$. Then $T$ is a $\pi$-increasing tree in which the vertex-set consists of two or more blocks of $\pi$ and contains $1$, and from Proposition~\ref{prop_Treducible}, $T$ is reducible. 

From Lemma~\ref{lemsplirred} (a), since $\mu_k$ is the maximum label,
$T$ can be written uniquely as
\[\spl(T_1,M;T_2,t)\]
where $T_1$ is irreducible.
Call $\tau_k=T_1$, $\tau_1=T_2$ and $\nu=t$.
We have uniquely reversed Stage $k$ in Construction~\ref{mapping} 
(by Proposition~\ref{proposition_psiandred}, in the input at Stage $k$,
$\tau_k$ is always irreducible).
Note that $1$ and $\nu$ lie in $\tau_1$ (from Lemma~\ref{lemsplirred} (a)).

If $k \ge 3$, we now want to invert Stage $k-1$, 
which is a splice in which the first splicing vertex is always $\mu_{k-1}$.
So we shall look at $\mu_{k-1}$ and consider two cases:
\begin{description}
    \item[Case 1.] $\mu_{k-1}$ lies in $\tau_1$.
        In this case, from Proposition~\ref{prop_Treducible}, $\tau_1$ is reducible.
        In addition, since $\mu_k$ (and all vertices in block $\pi_k$) lie in $\tau_k$,
        $\mu_{k-1}$ is the maximum of $\tau_1$.
        Thus, from Lemma~\ref{lemsplirred} (a), $\tau_1$ can be written uniquely as
        \[\spl(T_1,\mu_{k-1};T_2,t)\]
        with $T_1$ irreducible.
        Call $\tau_{k-1}=T_1$, $\tau_1=T_2$, $c_{k-1}=\nu$ and 
        then update the value of $\nu$ to $t$.
        Then $c_{k-1}$ lies in $\tau_1$ or $\tau_{k-1}$ 
        (since, before this step, $\nu$ lies in $\tau_1$).
        Recall that, in the input of Stage $k-1$, $\tau_{k-1}$ is always irreducible 
        (Proposition~\ref{proposition_psiandred}).
        Thus, we have uniquely reversed Stage $k-1$ in Construction~\ref{mapping},
        which was an internal splice.

        Note that, after this step, $1$ and $\nu$ still lie in $\tau_1$.\smallskip

    \item[Case 2.] $\mu_{k-1}$ lies in $\tau_k$.
        In this case, $\mu_{k-1}$ is the second maximum of $\tau_k$, and recall
        that $\tau_k$ is irreducible by construction.
        From Lemma~\ref{lemsplirred} (b), $\tau_k$ can be written uniquely as
        \[\spl(T_1,\mu_{k-1};T_2,t)\]
        with $T_1$, $T_2$ irreducible, where $M$ lies in $T_2$ and $t < \mu_{k-1}$.
        Call $\tau_{k-1}=T_1$, $\tau_k=T_2$ and $c_{k-1}=t$
        (the value of $\nu$ is unchanged).
        Then $c_{k-1}$ lies in $\tau_k$.
        Recall that, in the input of Stage $k-1$, $\tau_{k-1}$ and $\tau_k$ 
        are always irreducible 
        and $M$ lies in $\tau_k$ (Proposition~\ref{proposition_psiandred}).
        Thus, we have uniquely reversed Stage $k-1$ in Construction~\ref{mapping},
        which was an external splice.

        Note that, since $\nu$ and $\tau_1$ have not been changed, $1$ and $\nu$ still lies in $\tau_1$.\smallskip
\end{description}

Now, all remaining stages $k-2, \ld ,2$ of Construction~\ref{mapping} can be uniquely reversed exactly 
as for Stage $k-1$ (Case 2 in general is ``$\mu_{i}$ lies in $\tau_{i+1}$, \dots ,$\tau_k$'').
After reversing Stage $i$, we have trees $\tau_1$, $\tau_i$, \ldots, $\tau_k$,
such that $\tau_1$ contains $1$, and $\nu$ and $\tau_i$, \ldots, $\tau_k$
are irreducible and contain $\mu_i$, \ldots, $\mu_k$, respectively.
From Proposition~\ref{oneblock}, we recover at the end 
the initial forest with components $\tau_{\ell}$, $\ell =1,\ld ,k$, 
where $\tau_{\ell}$ is the increasing chain consisting of the elements
of the block $\pi_{\ell}$ of $\pi$.
Along the way, we recover uniquely the elements $c_i$ of the $(k-2)$-tuple $c\in\mcC(\pi)$.
We conclude that $c=\psi_{\pi}^{-1}(T)$, and that $\psi_{\pi}$ is a bijection.
$\hfill\Box$

\bibliographystyle{abbrv}
\bibliography{2013May}

\end{document}